\def\sn{\bbs^n}
\def\bbr{{\Bbb R}}
\def\bbc{{\Bbb C}}
\def\bbd{{\Bbb D}}
\def\bbs{{\Bbb S}}
\def\min{{\hbox{\rm min}}}
\def\rn{\bbr^n}
\def\part{\partial}
\def\intl{\int\limits}
\def\Gam{\Gamma}
\def\Om{\Omega}
\def\a{\alpha}
\def\Del{\Delta}
\def\del{\delta}
\def\vp{\varphi}
\def\gam{\gamma}
\def\Lam{\Lambda}
\def\sig{\sigma}
\def\lam{\lambda}
\def\z{\zeta}
\def\th{\theta}
\def\e{\varepsilon}
\def\t{\tau}
\def\snm1{\bbs^{n-1}}
\newtheorem{theorem}{Theorem}[section]
\newtheorem{lemma}[theorem]{Lemma}
\theoremstyle{definition}
\theoremstyle{remark}
\newtheorem{remark}[theorem]{Remark}
\theoremstyle{corollary}
\newtheorem{corollary}[theorem]{Corollary}
\numberwithin{equation}{section}
\newcommand{\be}{\begin{equation}}
\newcommand{\ee}{\end{equation}}
\newcommand{\bea}{\begin{eqnarray}}
\newcommand{\eea}{\end{eqnarray}}
\newcommand{\Bea}{\begin{eqnarray*}}
\newcommand{\Eea}{\end{eqnarray*}}
\def\sideremark#1{\ifvmode\leavevmode\fi\vadjust{\vbox to0pt{\vss
 \hbox to 0pt{\hskip\hsize\hskip1em
\vbox{\hsize2cm\tiny\raggedright\pretolerance10000
 \noindent #1\hfill}\hss}\vbox to8pt{\vfil}\vss}}}%
\begin{document}

\title[One-sided Fractional Integrals ]
{One-sided Fractional Integrals and  Riesz Potentials on a Spherical Cap}

\author{ B. Rubin}

\address{Department of Mathematics, Louisiana State University, Baton Rouge,
Louisiana 70803, USA}
\email{borisr@lsu.edu}

\subjclass[2020] {31B10, 45P05, 47G40}



\maketitle

\begin{abstract} We introduce fractional integrals  on the $n$-dimensional spherical cap, study their boundednes in weighted $L^p$ spaces  and obtain explicit inversion formulas.
The results are applied to the inversion problem for  Riesz  potentials on a spherical cap. In the case $n=2$, this problem arises in electrostatics, elasticity, and some other applied areas. The main tools are stereographic projection,  hypersingular integrals of the Marchaud type,  Poisson integrals, and spherical harmonic decompositions.

 \end{abstract}

\section {Introduction}

Let $\Om$ be  an $n$-dimensional spherical cap, $n\ge 2$. The main objective of the present article is explicit inversion of the
 Riesz  potential
  \be\label {Riesz}
(I^\a_\Om\vp)(x)=c_{n,\a}\!\intl_{\Om}\!
|t\!-\!\t|^{\alpha-n}\vp(\t)\,d\t, \quad c_{n,\a}=\frac{\Gamma ((n\!-\!\alpha)/2)} {2^\a\pi^{n/2}\Gamma (\alpha/ 2)},\ee
where  $0<\a<n$,  $t \in \Om$. This problem arises in electrostatics, elasticity, and some other applied areas; see, e.g.,
Collins \cite{Coll, Coll2} (the axisymmetric case for $n=2$, $\a=1$) and  Homentcovschi  \cite{Hom} (the asymmetric case for $n=2$, $\a=1$).
 An explicit inversion formula in the asymmetric  case for  $n=2$, $0<\a<2$, was obtained by  Fabrikant,  Sankar,  Roytman, and Swamy   \cite{FSRS}; see also Shail \cite{Shail} and Bilogliadov \cite{Bilo18}     for the axisymmetric problem with any $n\ge 2$, $\a=n-1$.

 The main tool in all these works is  factorization of Riesz potentials in terms of the Riemann-Liouville (or Abel) fractional integrals and their modifications. In the simplest form, these integrals are defined by the formulas
 \be\label {ch2 tag 4.1}
(I^\a_{a +} \psi) (t)\! =\! \frac{1}{\Gamma
(\alpha)} \intl^t_a \!\frac{\psi (\t)\, d\t}
{(t - \t)^{1-\a}}, \quad
(I^\a_{b -} \psi) (t)\! = \!
\frac{1}{\Gamma (\alpha)} \intl_t^b \!\frac{\vp (\t)\, d\t}
{(\t - t)^{1 - \alpha}}; \ee
\[- \infty \le a < t < b \le \infty, \qquad \Gamma (\alpha) = \intl^\infty_0 t^{\alpha -1} e^{-t} dt.\]
The integrals $I^\alpha_{a +} \psi$ and $I^\alpha_{b -} \psi$ are called the
{\it left-sided} and {\it right-sided}, respectively.

The present article is an updated and  revised version of the 1988 preprint \cite{VolR} (joint with O. Volovlikova), which was alluded to in  \cite [p. 508]{Ru24},  but remained unpublished.
 We introduce  one-sided  fractional integrals on a spherical cap
(see (\ref{2.2left}), (\ref{2.2right})) and apply them  to inversion of the Riesz potential (\ref{Riesz})  for all $n\ge 2$, $\a\in (0, n)$.
These  integrals can be inverted by making use of hypersingular integrals of the Marchaud type. The theory of such hypersingular integrals  was developed in \cite {Ru24}, following the original work by A. Marchaud \cite{Marc}  and the book \cite{SKM}. This approach  differs from that in the afore-mentioned applied publications and stems from   the   results in \cite{Ru84, Ru94a} and \cite [Chapter 10] {Ru24} for  the  $n$-dimensional ball.
The transition from the $n$-sphere in $\bbr^{n+1}$  to $\rn$ is  performed  by making use of the  stereographic projection. One can also implement   spherical harmonic expansions to invert spherical fractional integrals in terms of the corresponding Fourier-Laplace series.

The consideration is carried out in  weighted $L^p$-spaces and might be of interest from the point of view of harmonic analysis. Section 2 contains preliminaries. In Section 3 we introduce  one-sided spherical fractional integrals, study their properties, and obtain  factorization formulas for the  Riesz potentials. Sections 4 and 5 are focused on the inversion formulas. Some auxiliary calculations are presented in Appendix.

\section  {Preliminaries}\label{Prel}

\subsection  {Notation}\label{Not}

As usual, $\bbr$ is the set of all real numbers, $\bbr_+=\{ a\in \bbr: a>0\}$;  $\bbr^{n+1}$  is the $(n+1)$-dimensional Euclidean space of points $x=(x_1, \ldots, x_{n+1})$; $S^n =\{ x\in \bbr^{n+1}: |x|=1\}$  is the unit sphere in $\bbr^{n+1}$ with the standard surface area measure $dx$;
\[
\langle a, b \rangle_S = \{x \in S^n: a < x_{n+1} < b \}, \qquad -1 \le a < b \le 1, \]
is the spherical segment  in $S^n$.

The coordinate hyperplane  $x_{n+1}=0$ will be identified with the $n$-dimensional Euclidean space $\rn$ of the points $\xi =(\xi_1, \ldots, \xi_n)$; $r=|\xi| =(\xi_1^2 + \cdots + \xi_n^2)^{1/2}$.
 The notation
\[
\langle a, b \rangle_B = \{\xi \in \rn: a < |\xi| < b \}, \qquad 0 \le a < b \le \infty; \]
will be used for the corresponding  ball layer. The letters $a,b$ may have a different meaning, depending on the context.

We set  $S^{n-1}$ to be  the  unit sphere
in $\bbr^{n}$, $\sigma_{n-1} =  2\pi^{n/2} \big/ \Gamma (n/2)$ is the surface area of $S^{n-1}$.  Given $\xi \in \rn \setminus \{0\}$, we denote $\xi'= \xi/|\xi| \in S^{n-1}$ and  write $f(\xi)\equiv f (r, \xi')$ in polar coordinates.

The Poisson integral of a function $\vp$ on $S^{n-1}$ is defined by the formula
\be\label {Pint} (\Pi \vp)(\xi', r) \!=\!\frac{1}{\sig_{n-1}}\! \intl_{S^{n-1}} \!\frac{1\!-\!r^2}{|r\xi' \!-\!\eta'|^n} \vp (\eta')  d\eta', \quad \xi' \in S^{n-1}, \; r\in [0,1).\ee
If $\{Y_{j,k} (\xi')\}$ is an orthonormal basis of spherical harmonics in $L^2 (S^{n-1})$,  $j\in
\{0, 1,2, \ldots \}$, $k \in \{1,2,\ldots d_n(j)\}$,  $ d_n(j)$  is the dimension of the subspace of spherical harmonics of degree $j$, then the corresponding Fourier-Laplace decomposition of $(\Pi \vp)(\xi', r)$ has the form
\be\label {Flap}
\!\!\!\!\!\! (\Pi \vp)(\xi', r) \sim  \!\sum\limits_{j=0}^\infty \sum\limits_{k=1}^{d_n(j)} r^j\vp_{j,k}Y_{j,k} (\xi'), \quad \vp_{j,k}=\!\intl_{S^{n-1}} \!\!\!\vp(\xi') Y_{j,k} (\xi') d\xi'; \ee
see, e.g., \cite[p. 523] {Ru24} and references therein.

We will be dealing with Riesz potentials $I^\a_\Om\vp$ of the form
(\ref{Riesz}), where   $\Om$ is  the spherical segment  $\langle a, b \rangle_S$ in $S^n$ or the ball layer $\langle a, b \rangle_B$ in $\rn$. The letters $t$ and $\t$ in (\ref{Riesz}) will be replaced by $x$ and $y$ (for $\sn$) and by $\xi$ and $\eta$ (for $\rn$).
If $\Om$ coincides with the entire space $\rn$ or  $\sn$, we  write $I^\a_R$ and $I^\a_S$ for the corresponding potentials  $I^\a_{\rn}$ and $I^\a_{\sn}$.

In the following, $I$ is the identity operator; $||\cdot ||_X$ denotes the norm in the space $X$;

\vskip 0.2truecm

\noindent $L^p (\Omega) = \big\{ f: \ \|f\|_{L^{ p} (\Omega)} =
\big( \ \intl_\Omega |f(x)|^p dx \big)^{1/p} < \infty \big\}, \quad 1
\le p < \infty$;

\vskip 0.2truecm

\noindent $L^\infty (\Omega) = \big\{ f: \ \|f\|_{L^\infty
(\Omega)} = \text {\rm ess} \sup\limits_{x \in \Omega} \ |f(x)| <
\infty \big\}$.

\vskip 0.2truecm

\noindent $p'$ is the conjugate exponent to $p: \ 1/p+1/p'=1$, $1\le p \le \infty$ (if $p=\infty$, we set $1/p'=0$).

\vskip 0.2truecm

\noindent $L^p (\Omega; w) = \{ f: \ \|f\|_{L^{p} (\Omega; w)}
= \| w f \|_{L^{p} (\Omega)} < \infty \}$.

\vskip 0.2truecm

The abbreviations
``$\lesssim$'' and ``$\simeq$'' will be used for ``$\le$'' and ``$=$'', respectively, if the
corresponding relation holds up to an inessential constant factor.
 Given a real-valued function $f$, we set $ f_{\pm}(x)= f(x)$ if  $\pm f(x)>0$ and  $ f_{\pm}(x)= 0$, otherwise.
 All integrals are  understood in the Lebesgue sense.

\subsection  {Fractional Integrals in the Space $L^p (\bbr_+, \rho)$}\label{RL fracs}

 The Riemann-Liouville   fractional integrals $I^\a_{a +} \psi$ and $I^\a_{b -} \psi$ on the interval $(a,b)$ are  defined by
(\ref {ch2 tag 4.1}).  We set $I^\a_{ +}= I^\a_{(-\infty) +}$, $I^\a_{-}= I^\a_{\infty -}$.

The following boundedness results for the operators  $I^\a_{0 +}$ (for $a=0$) and $I^\a_{-}$ on $\bbr_+$ were obtained in \cite{Ru86}.

\begin{theorem}\label {Th 1.1} Let $\rho (t)=(1+t)^\gam \,t^{\gam_1}$, $1<p<\infty$. If $\gam_1 <1/p'$, then $I^\a_{0 +}$ is bounded from $L^p(\bbr_+, \rho)$ to $L^p(\bbr_+, \rho_+)$, where
\[\rho_+ (t)=(1+t)^\del \, t^{\gam_1 -\a}, \quad \del = \frac{1}{p'}- \gam_1- \begin{cases}
\displaystyle{\frac{1}{p'}- \gam_1-\gam}, &\hbox {if} \quad \gam_1+ \gam < \frac{1}{p'}, \\
\displaystyle{\e}, \,\e >0, &\hbox {if} \quad \gam_1+ \gam \ge \frac{1}{p'}. \\
\end{cases}\]
\end{theorem}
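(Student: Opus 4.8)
The plan is to reduce the weighted inequality to the dilation-invariant power-weight estimate
\[
\|t^{\mu-\a}\,I^\a_{0+}\psi\|_{L^p(\bbr_+)}\lesssim \|t^{\mu}\,\psi\|_{L^p(\bbr_+)},\qquad \mu<1/p',
\]
and then to cope with the non-homogeneous factor $(1+t)^\gam$ by splitting $\bbr_+=(0,1)\cup(1,\infty)$. The homogeneous estimate itself I would prove directly: writing $f=t^\mu\psi$ and substituting $\tau=ts$ turns $t^{\mu-\a}I^\a_{0+}\psi(t)$ into $\tfrac1{\Gamma(\a)}\intl_0^1(1-s)^{\a-1}s^{-\mu}f(ts)\,ds$, a dilation-averaging operator; the generalized Minkowski inequality bounds its $L^p$-norm by $\|f\|_p$ times $\tfrac1{\Gamma(\a)}\intl_0^1(1-s)^{\a-1}s^{-\mu-1/p}\,ds=B(\a,\,1/p'-\mu)/\Gamma(\a)$, finite precisely when $\mu<1/p'$. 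This single scaling estimate is the engine of the whole argument.

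Next I would set $\psi_0=\psi\,\chi_{(0,1)}$ and $\psi_1=\psi\,\chi_{(1,\infty)}$, so that $I^\a_{0+}\psi=I^\a_{0+}\psi_0+I^\a_{0+}\psi_1$. Since the kernel is supported in $\{\tau<t\}$, the term $I^\a_{0+}\psi_1$ vanishes on $(0,1)$ and coincides with $I^\a_{1+}\psi_1(t)=\tfrac1{\Gamma(\a)}\intl_1^t(t-\tau)^{\a-1}\psi(\tau)\,d\tau$ on $(1,\infty)$. On $(0,2)$ both $\rho$ and $\rho_+$ are comparable to the pure powers $t^{\gamma_1}$ and $t^{\gamma_1-\a}$, so applying the homogeneous estimate with $\mu=\gamma_1<1/p'$ and restricting controls $\rho_+\,I^\a_{0+}\psi_0$ on $(0,2)$ by $\|\rho\psi\|_{L^p(0,1)}$ for free. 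The real content lies on $(2,\infty)$, where $(1+t)^\gam$ destroys the dilation symmetry that made the homogeneous estimate work.

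Two genuine estimates then remain. For the tail of the $\psi_0$-part, $t>2$ and $\tau\in(0,1)$ give $(t-\tau)^{\a-1}\simeq t^{\a-1}$, so $|I^\a_{0+}\psi_0(t)|\lesssim t^{\a-1}\intl_0^1|\psi|\,d\tau$; Hölder's inequality bounds $\intl_0^1|\psi|\,d\tau$ by $\|\rho\psi\|_{L^p(0,1)}\,\|\tau^{-\gamma_1}\|_{L^{p'}(0,1)}$, the last factor finite exactly because $\gamma_1<1/p'$, while $\|\rho_+\,t^{\a-1}\|_{L^p(2,\infty)}=\|t^{\del+\gamma_1-1}\|_{L^p(2,\infty)}$ is finite iff $\del+\gamma_1<1/p'$. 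For the large-large part $I^\a_{1+}\psi_1$ on $(1,\infty)$, put $h=\tau^{\gam+\gamma_1}\psi$ and substitute $\tau=ts$; cancelling powers of $t$ gives $t^{\del+\gamma_1-\a}I^\a_{1+}\psi_1(t)=\tfrac{t^{\del-\gam}}{\Gamma(\a)}\intl_{1/t}^1(1-s)^{\a-1}s^{-(\gam+\gamma_1)}h(ts)\,ds$, and since $\del\le\gam$ in both cases one has $t^{\del-\gam}\le1$. Minkowski's inequality together with the bound $\|t^{\del-\gam}h(ts)\chi_{\{t>1/s\}}\|_{L^p(dt)}\le s^{-(\del-\gam)-1/p}\|h\|_p$ reduces the whole estimate to convergence of $\intl_0^1(1-s)^{\a-1}s^{-\gamma_1-\del-1/p}\,ds$, that is, again to $\gamma_1+\del<1/p'$.

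Thus all the convergence conditions collapse to the single requirement $\gamma_1+\del<1/p'$ together with $\del\le\gam$, and the piecewise definition of $\del$ is exactly what secures both. If $\gam+\gamma_1<1/p'$ one takes the scaling-natural value $\del=\gam$, for which $\gamma_1+\del=\gamma_1+\gam<1/p'$; if $\gam+\gamma_1\ge1/p'$ this choice would make the tail norm $\|t^{\del+\gamma_1-1}\|_{L^p(2,\infty)}$ diverge, so one lowers $\del$ to $1/p'-\gamma_1-\e$, for which $\gamma_1+\del=1/p'-\e<1/p'$ while $\del\le\gam$ still holds. I expect the behaviour at infinity to be the main obstacle: the origin is governed for free by the homogeneous estimate, but the inhomogeneous weight $(1+t)^\gam$ forces one to track the slowly decaying tail of $I^\a_{0+}$ by hand and to accept the loss of $\e$ in the borderline regime $\gam+\gamma_1\ge1/p'$.
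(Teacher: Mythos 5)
Your argument is correct and complete. Note, however, that the paper itself does not prove this statement: it is quoted verbatim from the reference [Ru86] (Rubin, \emph{Izv.\ Akad.\ Nauk Armyan.\ SSR Ser.\ Mat.} 1986), so there is no in-paper proof to compare against. Judged on its own, your proof is sound: the homogeneous estimate $\|t^{\mu-\a}I^\a_{0+}\psi\|_p\lesssim\|t^\mu\psi\|_p$ for $\mu<1/p'$ via the substitution $\t=ts$ and Minkowski's integral inequality is the standard Hardy--Littlewood-type lemma; the splitting into the three regimes (small--small, small--large, large--large) is exhaustive because $I^\a_{0+}\psi_1$ vanishes on $(0,1)$; the kernel comparison $(t-\t)^{\a-1}\simeq t^{\a-1}$ for $t>2$, $\t<1$ is valid; and the bookkeeping showing that every convergence requirement reduces to $\gam_1+\del<1/p'$ together with $\del\le\gam$ is exactly what the piecewise definition of $\del$ delivers (with $\del=\gam$ the scaling-natural choice below the threshold, and the $\e$-loss forced above it). The only cosmetic quibbles: the identity $\|\rho_+ t^{\a-1}\|_{L^p(2,\infty)}=\|t^{\del+\gam_1-1}\|_{L^p(2,\infty)}$ should be ``$\simeq$'', and one should remark that the a.e.\ absolute convergence of $I^\a_{0+}\psi$ follows from running the same estimates with $|\psi|$ in place of $\psi$; neither affects the validity of the argument.
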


\begin{theorem}\label {Th 1.2}  Let $\rho (t)=(1+t)^\gam \,t^{\gam_2}$, $1<p<\infty$. If $\gam +\gam_2 >\a-1/p$, then $I^\a_{-}$ is bounded from $L^p(\bbr_+, \rho)$ to $L^p(\bbr_+, \rho_-)$, where
\[\rho_- (t)=(1+t)^\del \, t^{\del_2}, \qquad \del= \gam +\gam_2 -\a-\del_2, \]
\[\del_2 =  \begin{cases}
\displaystyle{\gam_2-\a}, &\hbox {if} \quad \gam_2 > \a- \frac{1}{p}, \\
\displaystyle{\e -\frac{1}{p},\,  \e >0}, &\hbox {if} \quad \gam_2 \le \a- \frac{1}{p}. \\
\end{cases}\]
\end{theorem}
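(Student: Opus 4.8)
The plan is to deduce Theorem~\ref{Th 1.2} from the already-established Theorem~\ref{Th 1.1} by means of the reciprocal substitution $t\mapsto 1/t$, which conjugates the right-sided operator $I^\a_-$ on $\bbr_+$ into the left-sided operator $I^\a_{0+}$. The starting point is a pointwise identity obtained by setting $t=1/u$, $\t=1/v$ in the definition (\ref{ch2 tag 4.1}) of $I^\a_{0+}$: since $1/u-1/v=(v-u)/(uv)$ and $d\t=-v^{-2}\,dv$, a direct computation collapses the powers of $uv$ and gives
\be
(I^\a_{0+}\psi)(1/u)=u^{1-\a}\,(I^\a_-\tilde\psi)(u), \qquad \tilde\psi(v)=v^{-1-\a}\,\psi(1/v). \label{conj}
\ee
Equivalently $(I^\a_-\tilde\psi)(u)=u^{\a-1}(I^\a_{0+}\psi)(1/u)$, and the correspondence $\psi\leftrightarrow\tilde\psi$ (a change of variable followed by multiplication by a fixed power) is a bijection of the relevant function classes. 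Thus the two operators are intertwined, and a weighted bound for one transfers to a weighted bound for the other.

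Next I would transfer the norms. Writing $\tilde\psi$ for a generic element of $L^p(\bbr_+,\rho)$ and substituting $u=1/s$ (so $du=s^{-2}\,ds$) in $\intl_0^\infty|\rho_-(u)(I^\a_-\tilde\psi)(u)|^p\,du$, the factor $u^{(\a-1)p}$ coming from (\ref{conj}) combines with the Jacobian to yield
\[
\|I^\a_-\tilde\psi\|_{L^p(\bbr_+,\rho_-)}=\|I^\a_{0+}\psi\|_{L^p(\bbr_+,w_+)}, \qquad w_+(s)=\rho_-(1/s)\,s^{1-\a-2/p}.
\]
The same substitution applied to the domain norm gives $\|\tilde\psi\|_{L^p(\bbr_+,\rho)}=\|\psi\|_{L^p(\bbr_+,w)}$ with $w(s)=\rho(1/s)\,s^{1+\a-2/p}$. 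Hence boundedness of $I^\a_-\colon L^p(\rho)\to L^p(\rho_-)$ is \emph{equivalent} to boundedness of $I^\a_{0+}\colon L^p(w)\to L^p(w_+)$, and the problem reduces to verifying the hypotheses of Theorem~\ref{Th 1.1} for $w$.

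Then I would simply compute. Since $\rho(t)=(1+t)^\gam t^{\gam_2}$ gives $\rho(1/s)=(1+s)^\gam s^{-\gam-\gam_2}$, the transformed domain weight is $w(s)=(1+s)^\gam s^{\gam_1'}$ with $\gam_1'=1+\a-2/p-\gam-\gam_2$; thus $w$ is again of the power type required by Theorem~\ref{Th 1.1}, with first exponent $\gam$ and second exponent $\gam_1'$. A one-line manipulation shows that $\gam_1'<1/p'$ is precisely the hypothesis $\gam+\gam_2>\a-1/p$, so Theorem~\ref{Th 1.1} applies. Moreover, the dichotomy $\gam_1'+\gam<1/p'$ versus $\gam_1'+\gam\ge1/p'$ in that theorem translates exactly into $\gam_2>\a-1/p$ versus $\gam_2\le\a-1/p$, and carrying the output weight $\rho_+$ of Theorem~\ref{Th 1.1} back through the relation $w_+(s)=\rho_-(1/s)s^{1-\a-2/p}$ recovers the stated $\rho_-$: matching the exponents of $(1+s)$ forces $\del=\gam$ in the first case and $\del=\gam+\gam_2-\a+1/p-\e$ in the second, while matching the powers of $s$ yields $\del+\del_2=\gam+\gam_2-\a$, so that $\del_2=\gam_2-\a$ and $\del_2=\e-1/p$ respectively, exactly as claimed.

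The conceptual content is entirely in the conjugation identity (\ref{conj}); once it is in hand the argument is bookkeeping. The step demanding the most care is tracking the Jacobian $s^{-2}$ together with the power $u^{\a-1}$ through both norms at once, since any slip there shifts $w$ and $w_+$ and breaks the exact matching of the output exponents $\del,\del_2$. A self-contained alternative would bypass Theorem~\ref{Th 1.1} altogether and estimate the operator $g\mapsto\rho_-\,I^\a_-(g/\rho)$ directly on unweighted $L^p(\bbr_+)$ by a Schur test: one splits the domain $\{0<t<\t\}$ into the three regions $t<\t\le1$, $t\le1<\t$, and $1<t<\t$, replaces each factor $(1+\cdot)$ by $1$ or by the variable so that the kernel becomes homogeneous of degree $-1$ on each piece, and applies the Hardy--Littlewood--P\'olya criterion $\intl_0^\infty k(1,s)\,s^{-1/p}\,ds<\infty$; convergence of these integrals reproduces the same conditions on $\gam,\gam_2,\a,p$. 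I expect the reciprocal-substitution route to be shorter and to explain most transparently why the two cases in $\del_2$ appear.
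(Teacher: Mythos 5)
Your argument is correct, but there is nothing in the paper to compare it against: the paper does not prove Theorem~\ref{Th 1.2} at all, stating only that both Theorem~\ref{Th 1.1} and Theorem~\ref{Th 1.2} ``were obtained in \cite{Ru86}.'' What you have produced is therefore a self-contained derivation of the second cited result from the first, via the inversion $t\mapsto 1/t$. I checked the key steps: the conjugation identity $(I^\a_{0+}\psi)(1/u)=u^{1-\a}(I^\a_-\tilde\psi)(u)$ with $\tilde\psi(v)=v^{-1-\a}\psi(1/v)$ is correct (the factor $(uv)^{1-\a}$ from $(1/u-1/v)^{\a-1}$ combines with the Jacobian $v^{-2}$ exactly as you say); the transformed weights $w(s)=\rho(1/s)s^{1+\a-2/p}=(1+s)^\gam s^{\gam_1'}$ with $\gam_1'=1+\a-2/p-\gam-\gam_2$ and $w_+(s)=\rho_-(1/s)s^{1-\a-2/p}$ are right; the hypothesis $\gam_1'<1/p'$ of Theorem~\ref{Th 1.1} is equivalent to $\gam+\gam_2>\a-1/p$, and the dichotomy $\gam_1'+\gam\lessgtr 1/p'$ is equivalent to $\gam_2\gtrless\a-1/p$; and matching exponents gives $\del+\del_2=\gam+\gam_2-\a$ with $\del_2=\gam_2-\a$ in the first case and $\del_2=\e-1/p$ in the second, which is exactly the statement. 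The only points worth making explicit in a polished write-up are (a) that the correspondence $\psi\leftrightarrow\tilde\psi$ is an isometric bijection between $L^p(\bbr_+,w)$ and $L^p(\bbr_+,\rho)$, so Theorem~\ref{Th 1.1} genuinely transfers rather than merely suggesting a bound, and (b) that the pointwise identity, applied first to $|\psi|$, also guarantees that $I^\a_-\tilde\psi$ converges absolutely a.e.\ whenever $I^\a_{0+}|\psi|$ does, so the two statements are equivalent and not just formally intertwined. Whether the original source \cite{Ru86} argues this way or estimates $I^\a_-$ directly (e.g.\ by the Schur/Hardy--Littlewood--P\'olya splitting you sketch as an alternative) cannot be determined from this paper, but your reduction is valid and is probably the shortest honest route given that Theorem~\ref{Th 1.1} is taken as known.
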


\subsection  {One-sided Ball Fractional Integrals and Factorization of Riesz Potentials on $\rn$}\label {OS ball}

Let $0\le a<b\le \infty$, $\xi \in \langle a, b\rangle_B \subset \rn$, $n\ge 2$.
 The integrals
\bea\label {ball tag 24.1}
&&(B_{a +}^{ \a} \vp)(\xi) = \frac{2}{\Gamma (\a)\, \sig_{n-1}} \intl_{\langle a, |\xi|
\rangle_B} \frac {(|\xi|^2 - |\eta|^2)^{ \a}}{|\xi-\eta|^n} \vp (\eta)\, d\eta, \\
\label {ball tag 24.2}
&&(B_{b -}^{ \a} \vp)(\xi) = \frac{2}{\Gamma (\a)\, \sig_{n-1}} \intl_{\langle |\xi|, b\rangle_B} \frac {(|\eta|^2 - |\xi|^2)^{ \a}}{|\xi-\eta|^n} \vp (\eta)\, d\eta, \eea
are called {\it the one-sided ball fractional integrals} of  order $\a >0$,
the left-sided and the right-sided, respectively   \cite{Ru94a, Ru24}.

Passing to polar coordinates, one can write these integrals in terms of the one-dimensional fractional integrals in the radial variable and the  Poisson integral in the spherical variable. Specifically,
\bea\label {ball tag 24.3}
&&(B^{ \a}_{a +} \vp) (\xi) = \frac{2}{\Gamma (\a)}
\intl^r_a (r^2 - \rho^2)^{{ \a} -1} \left( \frac{\rho}{r}
\right)^{n-2} \Pi [ \vp (\rho, \cdot) ] \big( \xi', \frac{\rho}{r}
\big) \rho\, d \rho, \qquad \qquad \\
\label {ball tag 24.4}
&&(B^{ \a}_{b-} \vp) (\xi) = \frac{2}{\Gamma (\a)}
\intl^b_r ( \rho^2 - r^2)^{{ \a} -1} \Pi [ \vp (\rho,\cdot) ] \big(\xi', \frac{r}{\rho} \big) \rho\, d \rho, \quad r = |\xi|. \qquad \qquad  \eea
In particular, if $\vp$ is a radial function, i.e., $\vp (\xi)=\vp_0 (r)$, then, owing to the equality $ \Pi [1]=1$ (see, e.g., \cite[p. 43, Th. 1.9(b)]{SW}, we have
$(B^{ \a}_{a +} \vp) (\xi) = (\tilde B^{ \a}_{a +} \vp_0) (r)$, $(B^{ \a}_{b-} \vp) (\xi) = (\tilde B^{ \a}_{b-} \vp_0) (r)$, where
\bea\label {ball tag 24.3x}
&&(\tilde B^{ \a}_{a +} \vp_0) (r) = \frac{2}{\Gamma (\a)}
\intl^r_a (r^2 - \rho^2)^{{ \a} -1} \left( \frac{\rho}{r}
\right)^{n-2}  \vp_0 (\rho) \rho\, d \rho, \qquad \qquad \\
\label {ball tag 24.4x}
&&(\tilde B^{ \a}_{b-} \vp_0) (r) = \frac{2}{\Gamma (\a)}
\intl^b_r ( \rho^2 - r^2)^{{ \a} -1} \vp_0 (\rho) \rho\, d \rho; \qquad \qquad  \eea
cf. (\ref{ch2 tag 4.1}).
The operators
\[B_+^\a = B_{0+}^\a, \qquad B_-^\a= B_{\infty -}^\a\]
will play an especially important role in our consideration because many results for $B^{ \a}_{a +}$ with $a\neq 0$ and $B^{ \a}_{b-}$ with $b\neq \infty$ can be derived from those for $B_{\pm}^\a$   using restriction and extension by zero.

\begin{lemma}\label {lemma 1.1} Let $\rho (\xi)=|\xi|^{\gam} (1+ |\xi|)^{\del}$,  $1<p<\infty$.
\begin{enumerate}[label= \rm {(\roman*)}]

\item  If $\gam<n/p'$, then the operator $B_+^\a$ is bounded from $L^p (\rn, \rho)$ to $L^p (\rn, \rho_+)$, where
\be\label {1.8}  \rho_+ (\xi)=|\xi|^{\gam_+} (1+ |\xi|)^{\del_+},\ee
\be\label {1.9} \gam_+= \gam -2\a, \quad \del_+ = \begin{cases}
\displaystyle{\del}, &\hbox {if} \quad \gam +\del< n/p', \\
\displaystyle{\frac{n}{p'} -\gam -\e,\,  \e >0}, &\hbox {if} \quad \gam +\del\ge n/p'. \\
\end{cases}\ee

\item  If $\gam +\del >2\a -n/p$,  then the operator $B_-^\a$ is bounded from $L^p (\rn, \rho)$ to $L^p (\rn, \rho_-)$, where
\be\label {1.10}  \rho_- (\xi)=|\xi|^{\gam_-} (1+ |\xi|)^{\del-},\ee
\bea
&&\!\!\!\!\!\!\!\displaystyle{\gam_-= \gam -2\a, \quad \del_- =\del,} \quad
 \hbox {if} \;\, \gam > 2\a \!- \!n/p, \nonumber\\
&&\!\!\!\!\!\!\!\displaystyle{\gam_-= -\frac{n}{p} +\e,\quad \del_- = \frac{n}{p} +\gam +\del -2\a+\e, \, \e>0}, \quad \hbox {if} \; \, \gam \le 2\a \!- \!n/p. \qquad \nonumber\eea
\end{enumerate}
\end{lemma}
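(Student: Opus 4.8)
The plan is to exploit the separation of variables in (\ref{ball tag 24.3})--(\ref{ball tag 24.4}), which expresses $B_\pm^\a$ as a one-dimensional fractional integral in the radial variable composed with the Poisson integral in the spherical variable. The whole argument reduces the two $n$-dimensional statements to the one-dimensional Theorems \ref{Th 1.1} and \ref{Th 1.2}, the main new ingredient beyond those theorems being the uniform $L^p$-boundedness of the Poisson integral on the sphere.

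First I would dispose of the spherical variable. The Poisson kernel $P(\xi',\eta';r)=\sig_{n-1}^{-1}(1-r^2)|r\xi'-\eta'|^{-n}$ in (\ref{Pint}) is nonnegative, satisfies $\intl_{S^{n-1}}P(\xi',\eta';r)\,d\eta'=1$ (the identity $\Pi[1]=1$ used in the text), and is symmetric in $\xi',\eta'$ since $|r\xi'-\eta'|^2=1-2r\,\xi'\!\cdot\eta'+r^2=|r\eta'-\xi'|^2$. Hence its column integral is also $1$, and Schur's test gives $\|\Pi[\psi](\cdot,t)\|_{L^p(S^{n-1})}\le\|\psi\|_{L^p(S^{n-1})}$ for every $t\in[0,1)$, uniformly in $t$. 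In (\ref{ball tag 24.3}) the Poisson integral is evaluated at $\rho/r\in[0,1)$ and in (\ref{ball tag 24.4}) at $r/\rho\in[0,1)$, so this bound always applies. Writing $g(r)=\|\vp(r,\cdot)\|_{L^p(S^{n-1})}$ and applying Minkowski's integral inequality in the $\rho$-variable to (\ref{ball tag 24.3})--(\ref{ball tag 24.4}), I obtain the pointwise-in-$r$ estimate $\big(\intl_{S^{n-1}}|(B_\pm^\a\vp)(r\xi')|^p\,d\xi'\big)^{1/p}\le(\tilde B_\pm^\a g)(r)$ with $\tilde B_\pm^\a$ as in (\ref{ball tag 24.3x})--(\ref{ball tag 24.4x}). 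Integrating the $p$-th power against the radial Jacobian $r^{n-1}$ then yields $\|B_\pm^\a\vp\|_{L^p(\rn,\rho_\pm)}\le\|\tilde B_\pm^\a g\|_{L^p(\bbr_+,w_\pm)}$ and $\|\vp\|_{L^p(\rn,\rho)}=\|g\|_{L^p(\bbr_+,w)}$, where the one-dimensional weights absorb the Jacobian: $w(r)=\rho(r)\,r^{(n-1)/p}$, and similarly $w_\pm(r)=\rho_\pm(r)\,r^{(n-1)/p}$.

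The remaining radial problem is handled by the substitution $s=r^2$, $\tau=\rho^2$. In the left-sided case this turns (\ref{ball tag 24.3x}) into $(\tilde B_+^\a g)(\sqrt s)=s^{-(n-2)/2}(I^\a_{0+}h)(s)$ with $h(\tau)=\tau^{(n-2)/2}g(\sqrt\tau)$, and in the right-sided case it turns (\ref{ball tag 24.4x}) into $(\tilde B_-^\a g)(\sqrt s)=(I^\a_- G)(s)$ with $G(\tau)=g(\sqrt\tau)$. Thus $B_+^\a$ and $B_-^\a$ become, after the change of variable, precisely the operators $I^\a_{0+}$ and $I^\a_-$, to which Theorems \ref{Th 1.1} and \ref{Th 1.2} apply. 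It then remains to push the weights through $s=r^2$, using $(1+\sqrt s)^\del\simeq(1+s)^{\del/2}$, and to read off the resulting one-dimensional exponents. A direct computation shows that the induced input exponent of $s$ equals $\tfrac12(\gam-(n-2)/p')$, so that the hypothesis $\gam_1<1/p'$ of Theorem \ref{Th 1.1} is equivalent to $\gam<n/p'$, while the hypothesis of Theorem \ref{Th 1.2} is equivalent to $\gam+\del>2\a-n/p$; translating the output exponents of the two theorems back through $s=r^2$ reproduces the output weight $\rho_+$ in (\ref{1.9}) and the weight $\rho_-$ of part (ii), up to the relabelling of the small parameter $\e$.

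The main obstacle is not conceptual but the careful bookkeeping of the weight exponents under $s=r^2$, where the Jacobian $r^{n-1}$, the factor $(\rho/r)^{n-2}$ present only in the left-sided operator, and the term $1/(2p)$ arising from $dr=\tfrac12 s^{-1/2}ds$ must all be accounted for; this asymmetry between $\tilde B_+^\a$ and $\tilde B_-^\a$ is exactly what produces the different forms of the two parts. The small parameter $\e$ in the endpoint cases is inherited from the corresponding endpoint cases of Theorems \ref{Th 1.1} and \ref{Th 1.2}, and is needed to keep the output weight within the admissible range where the one-dimensional estimates hold.
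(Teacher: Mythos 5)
Your proposal is correct and follows essentially the same route as the paper: both reduce $B_\pm^\a$ to the one-dimensional operators $I^\a_{0+}$ and $I^\a_-$ via the polar decomposition (\ref{ball tag 24.3})--(\ref{ball tag 24.4}), the $L^p(S^{n-1})$-contraction property of the Poisson integral, and the substitution $t=r^2$, after which Theorems \ref{Th 1.1} and \ref{Th 1.2} are invoked; your exponents $\gam_1=\tfrac12(\gam-(n-2)/p')$ and $\gam_2=\tfrac{n-2}{2p}+\tfrac{\gam}{2}$ agree with (\ref{1.13}) and (\ref{1.15}). You merely spell out the Schur-test justification of the contraction and the Minkowski step that the paper leaves implicit.
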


\begin{proof} (i) By the contraction property of the Poisson integral,
\be\label {1.12} ||(B_+^\a \vp)(\sqrt {t}, \cdot)||_{L^p (S^{n-1})}\le t^{1-n/2} (I_{0+}^\a \psi_1)(t), \ee
where
$\psi_1(t)=t^{n/2 -1} ||\vp(\sqrt {t}, \cdot)||_{L^p (S^{n-1})}$ and
\be\label {1.13}
||t^{\gam_1} (1+t)^{\del/2} \psi_1(t)||_{L^p (\bbr_+)} \lesssim ||\vp||_{L^p (\rn, \rho)}, \qquad \gam_1=\frac{2-n}{2p'}+\frac{\gam}{2}.\ee
Indeed,
\bea
&&||t^{\gam_1} (1+t)^{\del/2} \psi_1(t)||^p_{L^p (\bbr_+)}\!= \!\intl_0^\infty (1+t)^{\del p/2}  t^{(n/2 -1+\gam_1)p}\, dt \!\intl_{S^{n-1}} \!|\vp(\sqrt {t}, \xi')|^p \,d\xi'\nonumber\\
&& \lesssim \intl_{\rn} |\xi|^{\gam p} (1+|\xi|)^{\del p} |\vp(\xi)|^p \, d\xi=||\vp||_{L^p (\rn, \rho)}. \nonumber\eea
Further, by (\ref {1.12}) and Theorem \ref{Th 1.1},
\bea
&&||B_+^\a \vp||^p_{L^p (\rn, \rho_+)}=\intl_0^\infty r^{n-1}  \rho_+^p (r) \,dr \intl_{S^{n-1}}  |(B_+^\a \vp)(r, \xi')|^p d\xi'\nonumber\\
&&=\frac{1}{2}\intl_0^\infty   t^{n/2 -1}\rho_+^p (\sqrt {t})\, ||(B_+^\a \vp)(\sqrt {t}, \cdot)||^p_{L^p (S^{n-1})} dt \nonumber\\
&&\lesssim \intl_0^\infty   t^{p\gam_+/2+(1-n/2)(p-1)} (1+t)^{p\del_+/2} |(I_{0+}^\a \psi_1)(t)|^p dt \nonumber\\
&& \le \intl_0^\infty   t^{\gam_1 p} (1+t)^{\del p/2}  |\psi_1(t)|^p dt  \lesssim   ||\vp||^p_{L^p (\rn, \rho)}, \nonumber\eea
as desired.

(ii) As in (\ref {1.12}), we obtain
\be\label {1.14} ||(B_-^\a \vp)(\sqrt {t}, \cdot)||_{L^p (S^{n-1})}\le  (I_{-}^\a \psi_2)(t), \ee
where
$\psi_2(t)= ||\vp(\sqrt {t}, \cdot)||_{L^p (S^{n-1})}$ and
\be\label {1.15}
||t^{\gam_2} (1+t)^{\del/2} \psi_2(t)||_{L^p (\bbr_+)} \lesssim ||\vp||_{L^p (\rn, \rho)}, \qquad \gam_2=\frac{n-2}{2p}+\frac{\gam}{2}.\ee
Owing to the same reasoning as in (i), these inequalities, combined with Theorem \ref{Th 1.2}, yield the desired statement for $B_-^\a$.
\end{proof}

Explicit inversion formulas for the  fractional integrals $B_{\pm}^\a$ can be obtained by making use of hypersingular integrals of the Marchaud type with finite differences. Assuming $\ell >\a$, we denote

\be\label {findiff} (\Del^\ell_\t f )(u)= \sum\limits_{j=0}^{\ell} \binom{\ell}{j} (-1)^j f(u\!-\!j\t);  \quad \varkappa (\a, \ell)\!=\!\intl_0^\infty\frac{(1\!-\!e^{-\t})^\ell}{\t^{1+\a}}d\t,    \ee
\[ (A_\xi^{+} F)(u)=\begin{cases}
\displaystyle{ \Big ( 1+ \frac{u}{|\xi|^2} \Big )_+^{n/2 -1}}   &\displaystyle{\Pi[F(\sqrt{|\xi|^2 +u}, \cdot)] \left ( \Big ( 1+ \frac{u}{|\xi|^2} \Big )^{1/2}, \xi'\right ),}\\
 &\hbox {if} \quad -|\xi|^2<u<0, \\
\displaystyle{F(\xi)},  &\hbox {if} \quad u=0, \\
\end{cases}\]
\[ (A_\xi^- F)(u)=\begin{cases}
\displaystyle{\Pi[F(\sqrt{|\xi|^2 -u}, \cdot)] \left ( \Big ( 1- \frac{u}{|\xi|^2} \Big )^{-1/2}, \xi'\right ),}&\hbox {if} \quad u<0, \\
\displaystyle{F(\xi)},  &\hbox {if} \quad u=0, \\
\end{cases}\]
where $\Pi$ stands for the Poisson integral  (\ref{Pint}), $\xi\in \rn \setminus \{0\}$, $\xi' =\xi/|\xi|$. Then  $(B_\pm^\a )^{-1} F=\bbd^{ \a}_{\pm} F$, where
\be\label {3.2}
(\bbd^{ \a}_{\pm} F)(\xi)=\frac{1}{\varkappa (\a, \ell)}\intl_0^\infty\frac{(\Del^\ell_\t A_\xi^\pm F )(0)}{\t^{1+\a}}\, d\t.\ee

To give these hypersingular integrals precise meaning, we consider the truncated integrals
\be\label {ball tag 25.10}
(\bbd^{ \a}_{\pm,  \e} F ) (\xi) = \frac{1}{\varkappa_{\ell} (\a)}
\intl^\infty_{ \e} \frac{(\Delta^\ell_{ \tau} A^\pm_\xi F)(0)}{\tau^{1 +  \a}} d \tau, \quad \e > 0. \ee

\begin{theorem}\label{balt} Let $g \in L^p (\rn; \rho)$, $\ 1 < p \le \infty$, $\rho (\xi)=|\xi|^{\lam} (1+ |\xi|)^{\del -\lam}$.

\noindent {\rm (a)} If $\lam < n/p'$, $-\infty < \del <\infty$, then $\bbd^{ \a}_{+} B_+^\a g\equiv \, \stackrel {a.e.}{\lim\limits_{\e \to 0}} \bbd^{ \a}_{+,  \e} B_+^\a g =g$.

\noindent {\rm (b)} If  $ \del > 2\a -n/p$, $-\infty < \lam <\infty$, then $\bbd^{ \a}_{-} B_+^\a g\equiv \,\stackrel {a.e.}{\lim\limits_{\e \to 0}} \bbd^{ \a}_{-,  \e} B_-^\a g =g$.

\end{theorem}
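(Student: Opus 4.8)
The plan is to reduce both inversions to the one-dimensional Marchaud inversion of the Riemann--Liouville integrals $I^\a_{0+}$ and $I^\a_{-}$, whose weighted $L^p$-mapping properties are recorded in Theorems \ref{Th 1.1} and \ref{Th 1.2}. The bridge is the substitution $t=r^2$, $s=\sigma^2$ in the radial representations (\ref{ball tag 24.3}), (\ref{ball tag 24.4}), under which the radial part of $B^\a_\pm$ becomes exactly a Riemann--Liouville integral in the new variable, together with the reproducing (semigroup) property of the Poisson integral: from the Fourier--Laplace expansion (\ref{Flap}) one reads off $\Pi[\Pi[\vp](\cdot, r_2)](\xi', r_1)=\Pi[\vp](\xi', r_1 r_2)$ for $0\le r_1, r_2<1$. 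The operators $A^\pm_\xi$ are designed precisely so that this collapse takes place.

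Concretely, for part (a) I would fix $\xi$, write $r=|\xi|$, $t=r^2$, and compute $A^+_\xi(B^\a_+ g)$ as a function of $u\in(-t,0)$. Inserting (\ref{ball tag 24.3}) with $a=0$ into the definition of $A^+_\xi$, interchanging the linear Poisson integral with the radial integration (Fubini), and applying the semigroup property to the resulting double Poisson integral, the radial weight $(\sigma/\rho)^{n-2}$ and the prefactor $(1+u/|\xi|^2)_+^{n/2-1}=(\rho/r)^{n-2}$ (with $\rho=\sqrt{t+u}$) combine into $(\sigma/r)^{n-2}$. After the change of variables $s=\sigma^2$ this yields the identity $A^+_\xi(B^\a_+ g)(u)=(I^\a_{0+}H)(t+u)$, where the radial slice $H(s)=(\sqrt s/r)^{n-2}\Pi[g(\sqrt s,\cdot)](\xi',\sqrt s/r)$ satisfies $H(t)=g(\xi)$, the last equality expressing that the Poisson integral reproduces its boundary values at radius $1$. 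Consequently $(\Del^\ell_\t A^+_\xi B^\a_+ g)(0)=(\Del^\ell_\t I^\a_{0+}H)(t)$, so the truncated integral (\ref{ball tag 25.10}) is exactly the truncated one-dimensional Marchaud integral of $I^\a_{0+}H$ evaluated at $t$, and the one-dimensional Marchaud inversion theorem (\cite{Ru24, SKM}) gives $\lim_{\e\to0}(\bbd^\a_{+,\e}B^\a_+ g)(\xi)=H(t)=g(\xi)$. Part (b) is entirely parallel: (\ref{ball tag 24.4}) carries no radial weight, the differences in (\ref{findiff}) place the evaluations on radii $\ge r$, and $I^\a_{-}$ replaces $I^\a_{0+}$.

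To turn the final equality into an a.e.\ statement on $\rn$ is where the real work lies, and it is the step I expect to be the main obstacle. Since $H$ depends on the evaluation radius $r$ through both its weight and the argument of the Poisson integral, before invoking the \emph{a.e.} one-dimensional inversion I would pass to the harmonic decomposition (\ref{Flap}): writing $g(\sigma,\cdot)\sim\sum_{j,k}g_{j,k}(\sigma)Y_{j,k}$, the $(j,k)$-component of the slice becomes $r^{-(n-2+j)}\,s^{(n-2+j)/2}g_{j,k}(\sqrt s)$, with the $r$-dependence now a scalar sitting outside $I^\a_{0+}$; the function actually inverted, $s\mapsto s^{(n-2+j)/2}g_{j,k}(\sqrt s)$, is independent of $t$, so its recovery at a.e.\ $t$ is meaningful and reassembles to $g$. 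The hypotheses $\lam<n/p'$ in (a) and $\del>2\a-n/p$ in (b) are exactly the conditions under which Lemma \ref{lemma 1.1} places $B^\a_\pm g$ in the relevant weighted space and, fiberwise, the slices lie in the weighted $L^p(\bbr_+)$ classes demanded by Theorems \ref{Th 1.1} and \ref{Th 1.2}; a Fubini argument combined with the norm and maximal-function estimates underlying those theorems is what upgrades the fiberwise one-dimensional a.e.\ convergence to a.e.\ convergence on $\rn$ and guarantees the existence of the limit defining $\bbd^\a_\pm$.
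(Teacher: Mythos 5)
First, a point of comparison: the paper does not prove this theorem at all --- it states it and refers to Theorems 10.23 and 10.25 of \cite{Ru24} (see also \cite{Ru94a}), so there is no in-paper argument to match yours against. That said, your algebraic reduction is sound and is certainly the structural reason the operators $A^{\pm}_\xi$ are defined as they are: after the substitution $t=|\xi|^2$, $s=\rho^2$ in (\ref{ball tag 24.3})--(\ref{ball tag 24.4}), the semigroup property of the Poisson integral collapses the composition to $(A^{+}_\xi B^\a_+ g)(u)=(I^\a_{0+}H)(t+u)$ with $H(s)=(\sqrt s/r)^{n-2}\,\Pi[g(\sqrt s,\cdot)](\xi',\sqrt s/r)$, and similarly for the minus case. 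The verification that the hypotheses $\lam<n/p'$, resp.\ $\del>2\a-n/p$, put the radial slices into the weighted classes of Theorems \ref{Th 1.1}--\ref{Th 1.2} is also the right bookkeeping (it mirrors the proof of Lemma \ref{lemma 1.1}).

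The genuine gap is in the passage from the one-dimensional Marchaud inversion to the a.e.\ statement on $\rn$, which you correctly flag as the hard step but do not actually close. The fiber function $H=H_{r,\xi'}$ depends on the evaluation radius $r$, so the one-dimensional theorem only gives convergence at $t'$ outside a null set \emph{that moves with} $r$; nothing guarantees that the single point $t'=r^2$ you need avoids the exceptional set of $H_{r,\xi'}$ for a.e.\ $r$. Your proposed remedy --- decomposing into Fourier--Laplace components so that the $r$-dependence factors out as the scalar $r^{-(n-2+j)}$ --- does make each component's inversion meaningful, but it trades one problem for a worse one: a.e.\ convergence of each component $f^{\pm}_{j,k}$ does not permit interchanging $\lim_{\e\to 0}$ with the infinite sum over $(j,k)$, and for a general $g\in L^p(\rn;\rho)$ the pointwise resummation ``reassembles to $g$'' is unjustified (the Fourier--Laplace series of an $L^p$ density need not converge pointwise). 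The way this is actually done in \cite{Ru94a} and \cite[Ch.~10]{Ru24} is to compute $\bbd^{\a}_{\pm,\e}B^\a_\pm g$ in closed form as an integral operator acting on $g$ itself, exhibit the resulting kernels as an approximate identity adapted to the one-sided ball geometry, and prove a.e.\ convergence at the Lebesgue points of $g$ via a maximal-function estimate on the weighted space; that global argument on $\rn$, rather than a fiberwise appeal to the one-dimensional a.e.\ theorem, is what your proposal is missing.
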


These statements are particular cases of the more general  Theorems 10.23 and 10.25 in \cite{Ru24}. If $1 < p <\infty$, then, the limit in these formulas can also be understood in  the $L^p (\rn; \rho)$-norm under some additional assumptions for $\lam$ and $\del$. To avoid technicalities, we restrict our consideration to the a.e. convergence, when the simpler assumptions are sufficient owing to the one-sided structure of our integrals.

In the case $0<\a<1$, the  formula (\ref{3.2}) yields the following elegant expressions; see \cite[Section 10.3.3]{Ru24}:
\bea\label {ball tag 25.6}
 (\bbd^{ \a}_{+}F) (\xi) &=& \frac{\Gamma (n/2)}{\Gamma
(n/2 -\a)} \ \frac{F (\xi)}{|\xi|^{2 { \a}}}  \\
&+& \frac{2 \a}{\sig_{n-1} \Gamma (1 - \a)} \int\limits_{|\eta| < |\xi|}
\frac{F (\xi) - F (\eta)}{(|\xi|^2 - |\eta|^2)^{ \a}\, |\xi - \eta|^n}\, d\eta.\nonumber\eea
\be\label {ball tag 25.9}
(\bbd^{ \a}_{-}F) (\xi) = \frac{2 \a}{\sig_{n-1} \Gamma
(1 - \a)} \int\limits_{|\eta| > |\xi|} \frac{F (\xi) - F (\eta)}{(|\eta|^2 -
|\xi|^2)^{ \a}\, |\xi -\eta|^n} \,d\eta. \ee

The following factorization of Riesz potentials  on $\rn$  plays a crucial role in our consideration.
\begin{theorem}\label{ball  Theorem 24.4} Let $\a \in (0, n)$, $n \ge 2$. If the integral $I_\bbr^{ \a} \vp$ exists in the Lebesgue sense, then
\be\label {ball tag 24.19}
I_\bbr^{ \a} \vp = 2^{- \a} B_+^{{ \a}/2} \tau_{- \a}
B_-^{{ \a}/2} \vp = 2^{- \a} B_-^{{ \a}/2} \tau_{- \a}
B_+^{{ \a}/2} \vp, \ee
where  $\tau_{-\a}=|\xi|^{-\a}I$ stands for the multiplication operator.
\end{theorem}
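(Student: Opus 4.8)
The plan is to reduce the $n$-dimensional identity to a family of one-dimensional identities indexed by spherical harmonics, and then to verify each of these by passing to the Mellin transform in the radial variable. A frontal assault --- writing $2^{-\a}B_+^{\a/2}\tau_{-\a}B_-^{\a/2}\vp$ as an iterated integral and applying Fubini --- would leave one to evaluate
\[
\intl_{|\eta|<\min(|\xi|,|\zeta|)}\frac{(|\xi|^2-|\eta|^2)^{\a/2-1}(|\zeta|^2-|\eta|^2)^{\a/2-1}}{|\eta|^{\a}\,|\xi-\eta|^n\,|\eta-\zeta|^n}\,d\eta
\]
and to show it equals $\mathrm{const}\cdot|\xi-\zeta|^{\a-n}$; this is awkward because of the angular coupling hidden in $|\xi-\eta|$ and $|\eta-\zeta|$. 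I would instead exploit the polar representations \eqref{ball tag 24.3}--\eqref{ball tag 24.4}, which already separate the angular variable through the Poisson integral.

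First, by linearity and density it suffices to prove the identity on $\vp(\eta)=f(|\eta|)\,Y_{j,k}(\eta')$. For such $\vp$ the Fourier--Laplace formula \eqref{Flap} gives $\Pi[Y_{j,k}](\xi',s)=s^{j}Y_{j,k}(\xi')$, so by \eqref{ball tag 24.3}--\eqref{ball tag 24.4} both $B_\pm^{\a/2}$ preserve the angular factor $Y_{j,k}(\xi')$ and act on the radial part through
\[
(\mathcal B_+^{\a/2}f)(r)=\frac{2}{\Gamma(\a/2)}\intl_0^r(r^2-\rho^2)^{\a/2-1}\Big(\frac{\rho}{r}\Big)^{n-2+j}f(\rho)\,\rho\,d\rho,
\]
\[
(\mathcal B_-^{\a/2}f)(r)=\frac{2}{\Gamma(\a/2)}\intl_r^\infty(\rho^2-r^2)^{\a/2-1}\Big(\frac{r}{\rho}\Big)^{j}f(\rho)\,\rho\,d\rho,
\]
while $\tau_{-\a}$ is multiplication by $r^{-\a}$. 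Thus on each component the right-hand side becomes a composition of three dilation-homogeneous operators in $r$.

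The key step is to diagonalize these radial operators by the Mellin transform. Testing on the homogeneous functions $f(r)=r^{-z}$ and evaluating the resulting Beta integrals, one finds that $\mathcal B_\pm^{\a/2}$ raise the homogeneity degree by $\a$ and act as multiplication by
\[
m_+(z)=\frac{\Gamma((n+j-z)/2)}{\Gamma((n+j-z+\a)/2)},\qquad m_-(z)=\frac{\Gamma((j+z-\a)/2)}{\Gamma((j+z)/2)}.
\]
Since $\tau_{-\a}$ restores the exponent $z$, the composite symbol of $2^{-\a}B_+^{\a/2}\tau_{-\a}B_-^{\a/2}$ on the degree-$j$ component is
\[
2^{-\a}m_+(z)m_-(z)=2^{-\a}\,\frac{\Gamma((j+z-\a)/2)\,\Gamma((n+j-z)/2)}{\Gamma((j+z)/2)\,\Gamma((n+j-z+\a)/2)}.
\]
On the other hand, the action of $I_\bbr^{\a}$ on $f(|\eta|)Y_{j,k}(\eta')$ is classical (Funk--Hecke together with the Bochner relation for the Fourier transform of $f\,Y_{j,k}$, equivalently Riesz's composition rule): it preserves $Y_{j,k}(\xi')$ and is a Mellin multiplier with exactly this symbol. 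For $j=0$ this is the elementary evaluation $I_\bbr^{\a}[|\xi|^{-z}]=2^{-\a}\frac{\Gamma((n-z)/2)\Gamma((z-\a)/2)}{\Gamma(z/2)\Gamma((n-z+\a)/2)}\,|\xi|^{\a-z}$, and the general $j$ follows from the analogous spherical-harmonic formula. Equality of symbols yields the first factorization; the second holds verbatim because the intermediate $\tau_{-\a}$ returns the exponent to $z$ in both orderings, so the scalar symbols $m_+(z)$ and $m_-(z)$ multiply commutatively.

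\emph{Main obstacle.} The difficulty is entirely technical and twofold. One must supply the explicit radial Mellin symbol of $I_\bbr^{\a}$ on each spherical harmonic and carry out the Gamma-function bookkeeping (which, reassuringly, turns out to be an exact equality rather than a genuine identity); and one must justify the Mellin diagonalization rigorously on the weighted spaces $L^p(\rn,\rho)$ of Lemma \ref{lemma 1.1}, i.e.\ control the Mellin strips and convergence and reduce a general admissible $\vp$ to the dense class of products $f\,Y_{j,k}$ --- or, equivalently, validate the Fubini step in the iterated-integral form above under the standing hypothesis that $I_\bbr^{\a}\vp$ exists in the Lebesgue sense.
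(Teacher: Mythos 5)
Your proposal is correct, but it takes a genuinely different route from the paper's. The paper proves the factorization by exactly the ``frontal assault'' you set aside: it writes $2^{-\a}B_+^{\a/2}\tau_{-\a}B_-^{\a/2}\vp$ as an iterated integral, applies Fubini, and invokes the two-center kernel identity
$c_{n,\a}|\xi-\eta|^{\a-n}=\mathrm{const}\int_{\Om^{\pm}_{\xi,\eta}}\bigl||\xi|^2-|\z|^2\bigr|^{\a/2}\bigl||\eta|^2-|\z|^2\bigr|^{\a/2}|\xi-\z|^{-n}|\eta-\z|^{-n}|\z|^{-\a}\,d\z$,
whose evaluation is delegated to \cite{Ru94a} and \cite{Ru24}. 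Your alternative diagonalizes on spherical harmonics via the Mellin transform, and the computations check out: each $\mathcal{B}_\pm^{\a/2}$ raises radial homogeneity by $\a$, the Beta integrals give exactly your $m_\pm(z)$, and $2^{-\a}m_+(z)m_-(z)$ coincides with the Mellin symbol of $I_\bbr^{\a}$ on the degree-$j$ component (via the Bochner--Hecke formula for the Fourier transform of $|x|^{-z-j}P_j(x)$), so the identity holds on each component in both orderings. What each approach buys: yours avoids evaluating the awkward two-center integral and in fact furnishes an independent proof of the kernel identity the paper only cites, since equality of the two operators on a dense class (say $C_c^\infty(\rn\setminus\{0\})$) forces a.e.\ equality of the two kernels, both being continuous off the diagonal; the paper's route, being a pointwise kernel identity fed into Tonelli--Fubini, applies immediately under the theorem's minimal hypothesis that $I_\bbr^{\a}\vp$ merely exists in the Lebesgue sense. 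To match that generality you must add the limiting step you flag as the ``main obstacle'': prove the identity for nice $\vp$, deduce the kernel identity, then apply Tonelli to $|\vp|$ and split $\vp$ into positive and negative parts. With that step written out, your argument is complete and, unlike the paper's, self-contained.
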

This statement follows by Fubini's theorem from the equality
\[
 c_{n,\a}\,|\xi - \eta|^{\a - n}  = \frac{2^{2 - { \a}}}
{\Gamma^2 (\a/2) \, \sig^2_{n-1}} \int\limits_{\Om^\pm_{{\xi},
{\eta}}} \frac{||\xi|^2 -|\z|^2 |^{{ \a} / 2} \ ||\eta|^2 - |\z|^2 |^{{ \a}/2}} {|\xi - \z|^n \ |y
- \z|^n \ |\z|^{ \a}}  d\z; \]
\[\Om^+_{\xi,\eta} = \langle 0, \min
(|\xi|; |\eta|) \rangle_B, \qquad \ \Om^-_{\xi,\eta} = \langle \max ( |\xi|; |\eta| ) , \infty
\rangle_B;\]
see \cite[Lemma 10.4, Theorem 10.5]{Ru24} or \cite[Theorem A]{Ru94a} for details.

\subsection  {Stereographic Projection}\label {Stereo}

The main results for  fractional integrals on the unit sphere will be derived from those on the Euclidean ball. Below we prepare the background for this transition. We regard the unit sphere $\sn$ as a set of the form
\be\label {1.17}  S^n =((0,\pi) \times S^{n-1}) \cup \{e_{n+1}\} \cup \{-e_{n+1}\},\ee
where $\pm e_{n+1}=(0, \ldots, 0, \pm 1)$.  Then every point $x \in \sn \setminus \{\pm e_{n+ 1}\}$  can be identified with the pair
\be\label{pola}  x=(\theta , \xi'), \qquad 0<\th <\pi, \quad \xi' \in S^{n-1}. \ee
Similarly we set $\rn=(\bbr_+ \times S^{n-1}) \cup \{0\}$, so that for every  $\xi\in \rn\setminus \{0\}$ we write
$\xi=(r, \xi')$ , $r= |\xi|$. Let $\dot {\bbr}^n$  be the one-point compactification of $\rn$  homeomorphic to $S^n$.
Consider the stereographic projection $\gam : \sn \to \dot {\bbr}^n$ defined by
\be\label{1.18}
x=(\theta , \xi') \stackrel {\gam}{\longrightarrow} \xi = (r, \xi'), \quad r=|\xi| =\tan \frac {\theta}{2}, \quad \text {\rm if}\quad x\neq \pm e_{n+ 1},\ee
and $-e_{n+1} \stackrel {\gam}{\longrightarrow} 0$, $e_{n+1} \stackrel {\gam}{\longrightarrow} \infty$, otherwise; see Figure 1.

\begin{figure}[h]
\centering
\includegraphics[scale=.5]{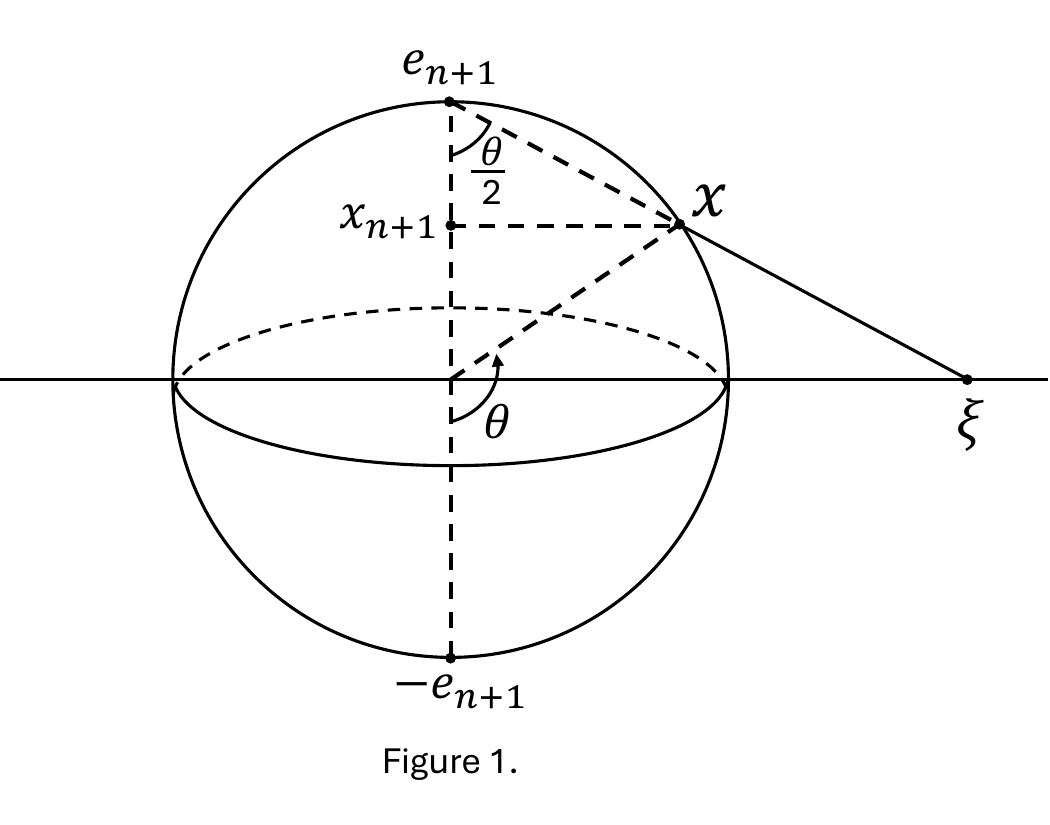}
\end{figure}

For  functions $f:\sn \to \bbc$ and $g:\rn \to \bbc$, we denote
\be\label{ster proj}  (Pf)(\xi)= f(\gam^{-1} \xi), \qquad (P^{-1}g)(x)=g (\gam x),\ee
where $\xi\in \rn$, $x\in \sn$. If $\xi \neq 0, \infty$, then
\be\label{1.19} (Pf)(\xi)= f(2 \tan^{-1} |\xi|, \xi').\ee
If $x\neq \pm e_{n+1}$, then
\be\label{1.20}
(P^{-1}g)(x)=  (P^{-1}g)(\th, \xi')= g\left (\tan \frac{\th}{2}, \xi'\right ).\ee

The following relations hold (cf. \cite[pp. 35-36]{Mi2}):
\be\label{1.21} |x-y|=\frac{2|\xi -\eta|}{(1+ |\xi|^2)^{1/2}\, (1+ |\eta|^2)^{1/2}}; \qquad \xi=\gam x, \; \eta =\gam y; \ee
\be\label{1.22}  1+\tan^2 \frac{\th}{2}=1+ |\xi|^2= \frac{2}{1-x_{n+1}}; \ee
\be\label{1.23}  1-x_{n+1}=\frac{2}{1+ |\xi|^2}, \qquad  1+x_{n+1}=\frac{2|\xi|^2}{1+ |\xi|^2}; \ee
\be\label{1.24}  \tan^2 \frac{\th}{2}= |\xi|^2=\frac{1+x_{n+1}}{1-x_{n+1}}; \ee
\be\label{1.25}  dx= \frac{2^n\,d\xi}{(1+ |\xi|^2)^n}; \qquad d\xi=\frac{dx}{(1-x_{n+1})^n}. \ee
We will  be using the following notations for the multiplication operators:
\bea
&&t_\lam=(1-x_{n+1})^\lam I; \qquad q_\lam=(1-x^2_{n+1})^{\lam/2} I; \nonumber\\
&& r_\lam=(1+ |\xi|^2)^{\lam/2} I; \qquad \t_\lam=|\xi|^\lam I.\nonumber\eea

\section {One-sided Fractional Integrals and Factorization of Riesz Potentials on the Sphere} \label {Sec 2}

In this section we  introduce one-sided spherical fractional integrals and investigate their properties in  weighted $L^p$-spaces. By making use of the stereographic projection, we establish connection
between these integrals and the  ball  integrals (\ref {ball tag 24.1}), (\ref {ball tag 24.2}). A factorization formula  for the spherical Riesz potentials $I^\a_S\vp$ in terms of the one-sided spherical integrals is obtained. This formula will play a key role  in Section \ref{inrep} for inversion of Riesz potentials on spherical caps.

\subsection {Definitions and Connections}

Given a point $x=(x_1, \ldots, x_{n+1})$  in $\sn$, consider the spherical segments
\bea
\langle a, x_{n+1} \rangle_S &=& \{y \in S^n: a < y_{n+1} < x_{n+1}\},  \nonumber\\
 \langle x_{n+1}, b \rangle_S &=& \{y \in S^n: x_{n+1} < y_{n+1} < b \},\nonumber\eea
where $-1 \le a < b \le 1$. 
We define the left-sided and right-sided  spherical fractional integrals of order $\a>0$ by the formulas
\be\label {2.2left}
(S^\a_{a+} \vp)(x)=\frac{2}{\Gam (\a)\, \sig_{n-1}} \intl_{\langle a, x_{n+1} \rangle_S} \frac{(x_{n+1} -y_{n+1})^\a}{|x-y|^n} \vp (y)\, dy,\ee
\be\label {2.2right}
(S^\a_{b-} \vp)(x)=\frac{2}{\Gam (\a) \,\sig_{n-1}} \intl_{\langle x_{n+1}, b \rangle_S} \frac{(y_{n+1} -x_{n+1})^\a}{|x-y|^n} \vp (y)\, dy,\ee
and denote $S^\a_{+}=S^a_{-1+}$, $ S^\a_{-}=S^a_{1-}$. Clearly,
\be\label {2.6}
 S^\a_{\pm} \vp=A S^\a_{\mp} A\vp, \quad (A\vp)(x)= \vp(x_1, \ldots, x_n, -x_{n+1}).\ee

The stereographic projection (\ref {ster proj}) establishes a remarkable connection between these integrals and the corresponding ball fractional integrals (\ref {ball tag 24.1}), (\ref {ball tag 24.2}).

\begin{lemma}\label{Lemma 2.2} Let $t_\lam =(1-x_{n+1})^\lam I$. We have
\bea\label {2.3} B^\a_{\pm} \psi&=&2^\a \,P\, t_{n/2 -\a} S^\a_{\pm}\, t_{-n/2 -\a}P^{-1} \psi, \\
\label {2.4}  S^\a_{\pm} \vp&=&2^{-\a}\, t_{\a-n/2} P^{-1} B^\a_{\pm} P\, t_{n/2 +\a} \vp,\eea
provided that either side of the corresponding  equality exists in the Lebesgue sense.
\end{lemma}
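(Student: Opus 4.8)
The plan is to prove (\ref{2.3}) directly and then deduce (\ref{2.4}) from it by pure operator algebra, so that only one of the two identities requires genuine work. Indeed, since $P^{-1}P=I$ on functions on $S^n$, $PP^{-1}=I$ on functions on $\rn$, and the multiplication operators compose additively, $t_\lam t_\mu=t_{\lam+\mu}$, one applies $t_{\a-n/2}P^{-1}$ on the left of (\ref{2.3}) and substitutes $\psi=P\,t_{n/2+\a}\vp$; the weights telescope, $t_{\a-n/2}t_{n/2-\a}=t_0=I$ and $t_{-n/2-\a}t_{n/2+\a}=I$, leaving exactly (\ref{2.4}). By the symmetry relation (\ref{2.6}) (or by an identical computation) it then suffices to treat the left-sided case $S^\a_+$, $B^\a_+$.

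Unfolding the right-hand side of (\ref{2.3}) for the ``$+$'' case reduces it to the pointwise claim that, with $x=\gam^{-1}\xi$ and $g(y)=(1-y_{n+1})^{-n/2-\a}\psi(\gam y)$,
\[
(S^\a_+ g)(x)=2^{-\a}\,(1-x_{n+1})^{\a-n/2}\,(B^\a_+\psi)(\xi).
\]
First I would change variables $\eta=\gam y$ in the spherical integral defining $(S^\a_+g)(x)$. The relation (\ref{1.21}) supplies $|x-y|^n$, the Jacobian (\ref{1.25}) supplies $dy=2^n(1+|\eta|^2)^{-n}d\eta$, and (\ref{1.23}) supplies $1-y_{n+1}=2/(1+|\eta|^2)$. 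The one new quantity is the difference of the last coordinates, obtained from $x_{n+1}=(|\xi|^2-1)/(|\xi|^2+1)$ (a rearrangement of (\ref{1.23})) and its analogue for $y$, giving
\[
x_{n+1}-y_{n+1}=\frac{2(|\xi|^2-|\eta|^2)}{(1+|\xi|^2)(1+|\eta|^2)}.
\]
This formula simultaneously identifies the domains --- $y_{n+1}<x_{n+1}$ is equivalent to $|\eta|<|\xi|$, so the spherical segment $\langle -1,x_{n+1}\rangle_S$ maps onto the ball layer $\langle 0,|\xi|\rangle_B$ --- and produces the factor $(|\xi|^2-|\eta|^2)^\a$ matching the numerator of $B^\a_+$ in (\ref{ball tag 24.1}).

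The crux of the computation is the bookkeeping of the resulting weights. Collecting the powers of $2$ from the four substituted factors gives $2^{-n/2}$, the powers of $1+|\xi|^2$ combine to $(1+|\xi|^2)^{n/2-\a}$, and --- this is the point --- the powers of $1+|\eta|^2$ cancel identically, because the conjugating weight $t_{-n/2-\a}$ hidden in $g$ was chosen precisely to absorb the kernel factor and the Jacobian in the $\eta$-variable. What remains is exactly $(B^\a_+\psi)(\xi)$ times the $\eta$-independent prefactor $2^{-n/2}(1+|\xi|^2)^{n/2-\a}$; rewriting $1+|\xi|^2=2/(1-x_{n+1})$ via (\ref{1.22}) turns this into $2^{-\a}(1-x_{n+1})^{\a-n/2}$, which is the displayed pointwise identity. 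Multiplying by $t_{n/2-\a}$ and applying $P$ then yields (\ref{2.3}).

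The step I expect to require the most care is not any single manipulation but the exact matching of the three families of exponents (the powers of $2$, of $1+|\xi|^2$, and of $1+|\eta|^2$), since it is this matching that pins down the specific weights $\mp n/2-\a$ and $n/2-\a$ in the statement; a sign slip there would spoil the cancellation of the $\eta$-weight. Two minor points also need attention: the poles $\pm e_{n+1}$ removed in (\ref{1.17}) form a set of measure zero and do not affect the integrals, and since the whole argument is a pointwise change of variables under the diffeomorphism $\gam$ with exact Jacobian, the equality holds precisely when either side exists in the Lebesgue sense, as claimed. The right-sided case follows verbatim with $\langle x_{n+1},1\rangle_S\leftrightarrow\langle|\xi|,\infty\rangle_B$ and $(y_{n+1}-x_{n+1})^\a\leftrightarrow(|\eta|^2-|\xi|^2)^\a$, or formally from (\ref{2.6}).
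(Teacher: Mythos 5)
Your proposal is correct and follows essentially the same route as the paper: a direct change of variables under the stereographic projection using (\ref{1.21}), (\ref{1.23}), (\ref{1.25}) to establish (\ref{2.3}), with (\ref{2.4}) then obtained by operator algebra and the ``$-$'' case handled by the analogous computation. The only cosmetic difference is the direction of the substitution (you push $S^\a_+$ forward to $\rn$, the paper pulls $B^\a_+$ back to $\sn$), and your exponent bookkeeping checks out.
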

\begin{proof} For the integral $B^\a_{+} \psi$, owing to (\ref{1.21}), (\ref{1.23}), (\ref{1.25}), we have
\bea   &&(P^{-1} B^\a_{+} \psi)(x)=\frac{2^{1+\a}}{\Gam (\a)\, \sig_{n-1}} \nonumber\\
&&\times \intl_{\langle -1, x_{n+1} \rangle_S}
\frac{\Big (\frac{1}{1-x_{n+1}}-\frac{1}{1-y_{n+1}}\Big )^\a}
{|x-y|^n} \, \Big (\frac{1-x_{n+1}}{1-y_{n+1}} \Big )^{n/2}  (P^{-1} \psi)(y) dy\nonumber\\
&&=\frac{2^{1+\a}}{\Gam (\a)\, \sig_{n-1}}  (1-x_{n+1})^{n/2 -\a} \intl_{\langle -1, x_{n+1} \rangle_S} \frac{(x_{n+1}- y_{n+1})^\a  (P^{-1} \psi)(y)}{|x-y|^n  (1-y_{n+1})^{n/2 +\a} } dy\nonumber\\
&&=2^\a \, t_{n/2 -\a} S^\a_{+}\, t_{-n/2 -\a}P^{-1} \psi.\nonumber\eea
This gives (\ref{2.3}) for $B^\a_{+} \psi$. The proof  for $B^\a_{-} \psi$ is similar. The equalities (\ref{2.4}) follow from (\ref{2.3}).
\end{proof}

The case of zonal (or axisymmetric) functions is of particular interest for applications. Passing to spherical polar coordinates  $x=(\theta , \xi')$, $0<\th <\pi$, $ \xi' \in S^{n-1}$, suppose that $\vp (x)$ depends on $\theta$ only, that is, $\vp$ is zonal. Then, by rotation invariance,  $S^\a_{\pm} \vp$ are zonal, too, and represented by one-dimensional fractional integrals as follows.

\begin{theorem}\label{zonal} Let $\vp (x)\equiv \vp(\theta , \xi')=\vp_0 (\tan \theta/2)$.  Then
\[ (S^\a_{\pm} \vp)(\theta , \xi')=(\tilde S^\a_{\pm} \vp_0) (\tan \theta/2),\]
 where
\bea\label{zonp}
(\tilde S^\a_{+} \vp_0) (r)&=&   c_\a (r) \!\intl_0^r \!(r^2 \!-\!\rho^2)^{\a -1}   \frac{\vp_0 (\rho)}{(1+\rho^2)^{\a +n/2}}\, \Big (\frac{\rho}{r}\Big )^{n-2} \rho\, d\rho, \\
\label{zonm}
(\tilde S^\a_{-} \vp_0) (r)&=&  c_\a (r) \intl_r^\infty (\rho^2 -r^2 )^{\a -1} \frac{\vp_0 (\rho)}{(1+\rho^2)^{\a +n/2}}\, \rho\, d\rho,     \eea
\be\label{tsal} c_\a (r)= \frac{2^{\a +1}}{\Gam (\a)} (1+r^2)^{n/2 -\a}, \qquad r= \tan \theta/2 \in (0, \infty).\ee
\end{theorem}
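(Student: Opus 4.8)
The plan is to deduce the formulas (\ref{zonp})--(\ref{zonm}) directly from the transfer identity (\ref{2.4}) of Lemma \ref{Lemma 2.2}, by exploiting the fact that for a zonal input the associated Euclidean function is \emph{radial}, which collapses the ball integrals $B^\a_\pm$ to their one-dimensional radial versions (\ref{ball tag 24.3x})--(\ref{ball tag 24.4x}).

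First I would record that $S^\a_\pm\vp$ is itself zonal: the kernels in (\ref{2.2left})--(\ref{2.2right}) depend on $y$ only through $y_{n+1}$ and $|x-y|$, so a rotation fixing the $x_{n+1}$-axis leaves the integrals invariant when $\vp$ is zonal. Hence it suffices to compute $(S^\a_\pm\vp)(\theta,\xi')$ as a function of $r=\tan(\theta/2)$ alone, and we may write $(S^\a_\pm\vp)(\theta,\xi')=(\tilde S^\a_\pm\vp_0)(r)$.

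The core step is to evaluate the right-hand side of (\ref{2.4}), namely $2^{-\a}\,t_{\a-n/2}\,P^{-1}B^\a_\pm P\,t_{n/2+\a}\vp$, from the inside out. Setting $\psi:=P\,t_{n/2+\a}\vp$ and using $1-x_{n+1}=2/(1+|\xi|^2)$ from (\ref{1.22}) together with the definition of $P$, I would check that $\psi(\xi)=\big(2/(1+r^2)\big)^{n/2+\a}\vp_0(r)=:\psi_0(r)$ depends on $r=|\xi|$ only. This radiality is exactly what lets me invoke $\Pi[1]=1$ and replace $B^\a_+\psi$, $B^\a_-\psi$ by the scalar integrals $\tilde B^\a_+\psi_0$, $\tilde B^\a_-\psi_0$ of (\ref{ball tag 24.3x})--(\ref{ball tag 24.4x}) with $a=0$, $b=\infty$; note that the factor $(\rho/r)^{n-2}$ appears in the left-sided radial formula but not the right-sided one, which accounts for the corresponding difference between (\ref{zonp}) and (\ref{zonm}). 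Applying $P^{-1}$ merely re-reads the result at $r=\tan(\theta/2)$, and the final multiplier $2^{-\a}t_{\a-n/2}$ contributes $(1-x_{n+1})^{\a-n/2}=2^{\a-n/2}(1+r^2)^{n/2-\a}$ by (\ref{1.22}) again.

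The remaining work is bookkeeping of constants, and this is where I expect the only real friction. Collecting the prefactors $2^{-\a}$ from (\ref{2.4}), $2^{\a-n/2}(1+r^2)^{n/2-\a}$ from $t_{\a-n/2}$, the $2/\Gam(\a)$ from the radial ball integral, and the $2^{n/2+\a}(1+\rho^2)^{-n/2-\a}$ sitting inside the integral through $\psi_0$, should collapse the powers of $2$ to $2^{\a+1}$ and reproduce exactly $c_\a(r)=2^{\a+1}(1+r^2)^{n/2-\a}/\Gam(\a)$ of (\ref{tsal}), with the weight $(1+\rho^2)^{-\a-n/2}$ landing on $\vp_0(\rho)$ inside the integral. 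Verifying that the exponents of $2$ and of $(1+r^2)$, $(1+\rho^2)$ match on the nose is routine but must be done carefully; once it checks out, (\ref{zonp})--(\ref{zonm}) follow for both signs simultaneously.
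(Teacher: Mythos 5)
Your proposal follows essentially the same route as the paper: both start from the transfer identity (\ref{2.4}), observe that $\psi=P\,t_{n/2+\a}\vp$ is radial with $\psi(\eta)=2^{n/2+\a}(1+|\eta|^2)^{-n/2-\a}\vp_0(|\eta|)$, reduce $B^\a_\pm\psi$ to the one-dimensional radial integrals (\ref{ball tag 24.3x})--(\ref{ball tag 24.4x}), and then apply the outer factor $2^{-\a}t_{\a-n/2}P^{-1}$ to collect the constant $c_\a(r)$. Your bookkeeping of the powers of $2$ and of $(1+r^2)$ checks out, so the argument is correct and matches the paper's proof.
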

\begin{proof}  Starting from (\ref{2.4}), we denote  $\psi (\eta)=(P\, t_{n/2 +\a} \vp)(\eta)$, where $t_\lam =(1-x_{n+1})^\lam I$, $\eta \in \rn$. Then, by  (\ref{1.22})-(\ref{1.22}),
\[\psi (\eta)=2^{n/2 +\a}  (1+|\eta|)^2)^{ -n/2 -\a} \vp_0 (|\eta|),\] and  (\ref{ball tag 24.3x}) yields
\[
(B^\a_{+} \psi)(\xi)= \frac{2^{n/2 +\a+1}}{\Gamma (\a)}
\!\intl^r_0 (r^2\! -\! \rho^2)^{{ \a} -1} \left( \frac{\rho}{r}
\right)^{n-2} \!(1\!+\!\rho^2)^{-\a -n/2} \vp_0 (\rho)\, \rho\, d \rho,\]
where $r=|\xi|$. This gives (\ref{zonp}) if we make use of (\ref{2.4}) the second time. The proof of (\ref{zonm}) is similar.
\end{proof}

Generalizations of the formulas (\ref{zonp}) and (\ref{zonm}) for arbitrary, not necessarily zonal  functions $\vp (x)\equiv \vp (\theta, \xi')$ can be similarly obtained from (\ref{ball tag 24.3}) and
 (\ref{ball tag 24.4}) by making use of  the Fourier-Laplace decomposition (\ref{Flap}) of the Poisson integral in the $\xi'$-variable.
 \begin{theorem}\label{nonzonal}
 If $f^{\pm}=S^\a_{\pm} \vp$, and
\[
\vp (x)\equiv \vp (\th, \xi') \sim \sum\limits_{j,k} \vp_{j,k} (r) Y_{j,k} (\xi'), \qquad r=\tan \theta/2,\]
is the Fourier-Laplace decomposition of $\vp (\th, \xi')$ in the $\xi'$-variable, then the Fourier-Laplace decompositions of  $f^{\pm}$ have the form
\[
f^{\pm} (x)\equiv f^{\pm} (\th, \xi') \sim \sum\limits_{j,k} f^{\pm}_{j,k} (\tan \theta/2)\, Y_{j,k} (\xi'),  \]
where
\bea\label{genp}
f^{+}_{j,k} (r)&=&   c_\a (r) \!\intl_0^r \!(r^2 \!-\!\rho^2)^{\a -1}   \frac{\vp_{j,k} (\rho)}{(1+\rho^2)^{\a +n/2}}\, \Big (\frac{\rho}{r}\Big )^{j+n-2} \rho\, d\rho, \qquad \\
\label{genm}
f^{-}_{j,k} (r)&=&  c_\a (r) \intl_r^\infty (\rho^2 -r^2 )^{\a -1} \frac{\vp_{j,k}(\rho)}{(1+\rho^2)^{\a +n/2}} \,\Big (\frac{r}{\rho}\Big )^{j}\,\rho\, d\rho,     \eea
$c_\a (r)$ being defined by (\ref {tsal}).
\end{theorem}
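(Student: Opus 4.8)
The plan is to repeat the argument of Theorem~\ref{zonal} but to retain the angular variable throughout, exploiting the fact that the Poisson integral acts diagonally on spherical harmonics. I start from the connecting formula (\ref{2.4}),
\[
S^\a_{\pm} \vp = 2^{-\a}\, t_{\a-n/2}\, P^{-1} B^\a_{\pm}\, P\, t_{n/2+\a} \vp,
\]
and set $\psi = P\, t_{n/2+\a}\vp$. Using the stereographic relations (\ref{1.22})--(\ref{1.23}) exactly as in the proof of Theorem~\ref{zonal}, I rewrite $\psi$ in polar coordinates $\eta=(\rho,\eta')$ as
\[
\psi(\rho, \eta') = \frac{2^{n/2+\a}}{(1+\rho^2)^{n/2+\a}}\, (P\vp)(\rho, \eta').
\]
Since $P$ leaves the angular variable untouched, the given decomposition of $\vp$ transfers directly and yields the Fourier--Laplace coefficients
\[
\psi_{j,k}(\rho) = \frac{2^{n/2+\a}}{(1+\rho^2)^{n/2+\a}}\, \vp_{j,k}(\rho).
\]

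Next I insert this into the polar representations (\ref{ball tag 24.3}), (\ref{ball tag 24.4}) of the ball integrals. The crucial ingredient is the Fourier--Laplace expansion (\ref{Flap}) of the Poisson integral, which shows that $\Pi[\psi(\rho,\cdot)](\xi',s)$ has $(j,k)$-coefficient $s^{j}\psi_{j,k}(\rho)$. Taking $s=\rho/r$ in the left-sided case and $s=r/\rho$ in the right-sided case, and projecting onto $Y_{j,k}$, I obtain
\[
(B^\a_+\psi)_{j,k}(r) = \frac{2}{\Gam(\a)}\intl_0^r (r^2-\rho^2)^{\a-1}\Big(\frac{\rho}{r}\Big)^{j+n-2}\psi_{j,k}(\rho)\,\rho\,d\rho,
\]
\[
(B^\a_-\psi)_{j,k}(r) = \frac{2}{\Gam(\a)}\intl_r^\infty (\rho^2-r^2)^{\a-1}\Big(\frac{r}{\rho}\Big)^{j}\psi_{j,k}(\rho)\,\rho\,d\rho.
\]
The extra factor $(\rho/r)^{j}$, respectively $(r/\rho)^{j}$, is precisely the contribution of the Poisson integral and is exactly what is absent in the zonal case $j=0$ of Theorem~\ref{zonal}.

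Finally I apply the outer operators. Pulling back by $P^{-1}$ preserves the angular decomposition and turns the radial variable into $r=\tan\theta/2$, while the remaining factor $2^{-\a}t_{\a-n/2}$ is radial: by (\ref{1.22})--(\ref{1.23}) it equals $2^{-\a}\cdot 2^{\a-n/2}(1+r^2)^{n/2-\a}$ and multiplies every coefficient. Substituting $\psi_{j,k}$ and collecting powers of $2$ (the exponents sum to $1+\a$) collapses the prefactor to $\tfrac{2^{1+\a}}{\Gam(\a)}(1+r^2)^{n/2-\a}$, which is exactly $c_\a(r)$ of (\ref{tsal}); this delivers (\ref{genp}) and (\ref{genm}).

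The only delicate point is the justification of reading off the coefficients term by term, i.e.\ interchanging the spherical-harmonic projection with the radial integration. This is handled by Fubini's theorem under the standing Lebesgue-existence hypothesis together with the diagonal action of $\Pi$ on the $Y_{j,k}$; it involves no difficulty beyond the bookkeeping already present in the zonal proof, so I expect the constant-tracking rather than any analytic subtlety to be the main thing to get right.
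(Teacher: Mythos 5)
Your proposal is correct and follows exactly the route the paper indicates (the paper gives no detailed proof, only the remark that the result is obtained from (\ref{ball tag 24.3}), (\ref{ball tag 24.4}) via the connecting formula (\ref{2.4}) and the Fourier--Laplace expansion (\ref{Flap}) of the Poisson integral, in parallel with the proof of Theorem \ref{zonal}). Your bookkeeping of the weight $\psi_{j,k}(\rho)=2^{n/2+\a}(1+\rho^2)^{-n/2-\a}\vp_{j,k}(\rho)$, the extra factors $(\rho/r)^j$ and $(r/\rho)^j$ from the diagonal action of $\Pi$, and the powers of $2$ summing to $1+\a$ to produce $c_\a(r)$ all check out.
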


Convergence of the series in this theorem is understood in the sense depending on the class of functions $\vp$, according to the standard   Fourier-Laplace theory on the sphere; see, e.g., \cite[Theorems A.15, A.46, A.87]{Ru15}.

\vskip 0.2 truecm

The next statement establishes connection between the Riesz potentials $I^\a_S$ and $I^\a_R$.

\begin{lemma}\label{Lemma 2.1} Let $P$ be the stereographic projection (\ref {ster proj}), $0<\a<n$. Then
\be\label{2.1}
I^\a_S\vp=2^\a P^{-1} r_{n-\a} I^\a_{\bbr} \,r_{-n-\a} P\vp, \qquad r_\lam= (1+|\xi|^2)^{\lam/2}, \ee
provided that either side of this equality exists in the Lebesgue sense.
\end{lemma}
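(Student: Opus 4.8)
The statement asserts that the spherical Riesz potential $I^\a_S$ is conjugate, via the stereographic projection $P$, to the Euclidean Riesz potential $I^\a_\bbr$ after insertion of the explicit weight factors $r_{n-\a}$ and $r_{-n-\a}$. The natural approach is a direct computation: push the definition of $I^\a_S\vp$ through the projection formulas (\ref{1.21}) and (\ref{1.25}) and check that the Euclidean kernel $|\xi-\eta|^{\a-n}$, together with the correct weights, drops out. The plan is to start from the right-hand side, apply the operators one at a time from right to left, and arrive at the integral defining $I^\a_S\vp$.

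First I would write out $(r_{-n-\a}P\vp)(\eta)=(1+|\eta|^2)^{-(n+\a)/2}\,\vp(\gam^{-1}\eta)$, apply the Euclidean potential $I^\a_\bbr$ using its kernel $c_{n,\a}|\xi-\eta|^{\a-n}$ as in (\ref{Riesz}), and then multiply by $r_{n-\a}(\xi)=(1+|\xi|^2)^{(n-\a)/2}$. At this stage I would change variables from $\eta\in\rn$ to $y\in\sn$ via $\eta=\gam y$, using the Jacobian $d\eta=dy/(1-y_{n+1})^n$ from (\ref{1.25}) and the relation $1+|\eta|^2=2/(1-y_{n+1})$ from (\ref{1.22}). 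Finally I would apply $P^{-1}$ to replace $\xi$ by $\gam x$, again using (\ref{1.22}) for $(1+|\xi|^2)$, and recognize the chordal-distance formula (\ref{1.21}), namely $|\xi-\eta|=|x-y|(1+|\xi|^2)^{1/2}(1+|\eta|^2)^{1/2}/2$, to rewrite $|\xi-\eta|^{\a-n}$ entirely in terms of $|x-y|$ and the $x_{n+1},y_{n+1}$ variables.

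The crux is the bookkeeping of the powers of $(1-x_{n+1})$ and $(1-y_{n+1})$. Substituting (\ref{1.21}) into $|\xi-\eta|^{\a-n}$ produces a factor $(1+|\xi|^2)^{(\a-n)/2}(1+|\eta|^2)^{(\a-n)/2}$ which must combine with the explicit weights and the Jacobian. Collecting everything, the $\xi$-dependent factors should be $(1+|\xi|^2)^{(n-\a)/2}\cdot(1+|\xi|^2)^{(\a-n)/2}=1$, so all $x$-dependence except through $|x-y|$ and $\vp$ cancels; similarly the $\eta$-side factors $(1+|\eta|^2)^{-(n+\a)/2}$, $(1+|\eta|^2)^{(\a-n)/2}$, and the Jacobian power of $(1+|\eta|^2)^{n}$ (coming from $(1-y_{n+1})^{-n}=(1+|\eta|^2)^n/2^n$) should combine to a constant, leaving the clean integrand $|x-y|^{\a-n}\vp(y)$ over $\sn$. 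Keeping track of the constant powers of $2$ produced by (\ref{1.21}), (\ref{1.22}), (\ref{1.25}) is the one genuinely error-prone step, and I expect it to account exactly for the prefactor $2^\a$ in (\ref{2.1}); this is the main obstacle, since a sign slip in any exponent will produce spurious powers of $(1\pm x_{n+1})$.

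The existence caveat (``provided either side exists in the Lebesgue sense'') lets me treat the chain of equalities formally as an identity of integrands and invoke the substitution rule, so no convergence subtleties arise beyond the stated hypothesis. I would present the computation as a single displayed string of equalities tracking the weights, as was done for $B^\a_+$ in the proof of Lemma \ref{Lemma 2.2}, and then remark that the cancellation of all non-constant factors together with the distance identity (\ref{1.21}) yields precisely $c_{n,\a}\int_{\sn}|x-y|^{\a-n}\vp(y)\,dy=(I^\a_S\vp)(x)$, completing the proof.
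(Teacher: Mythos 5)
Your proposal is correct and matches the paper's proof in all essentials: the paper performs the same change of variables via (\ref{1.21}), (\ref{1.22}), (\ref{1.25}), merely starting from $I^\a_S\vp$ and transforming to the Euclidean side rather than unwinding the right-hand side as you do, and your power-of-two and weight bookkeeping checks out exactly. No gaps.
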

\begin{proof}
Using the formulas (\ref {1.21}) and  (\ref {1.25}) with $\xi=\gam x$, $\eta =\gam y$, and setting
$\tilde \vp (\eta)=\vp (2\tan^{-1} |\eta|, \eta')=(P\vp)(\eta)$, we obtain
\bea
(I^\a_S\vp)(x)&=&c_{n,\a}\!\intl_{\sn}\!
|x\!-\!y|^{\alpha-n}\vp(y)\,dy\nonumber\\
&=& 2^\a c_{n,\a}  (1+|\xi|^2)^{(n-\a)/2}\!\intl_{\rn}\! \frac{\tilde \vp (\eta)\, d\eta}{|\xi-\eta|^{n-\a}\,  (1+|\eta|^2)^{(n+\a)/2}}, \nonumber\eea
which gives (\ref{2.1}).
\end{proof}

The factorization formula (\ref {ball tag 24.19}) for the Riesz potentials on $\rn$  yields a  similar result for  potentials on the sphere.

\begin{theorem}\label{Lemma 2.4} Let $\a \in (0, n)$, $n \ge 2$. If the integral $I_S^{ \a} \vp$ exists in the Lebesgue sense, then
\be\label {2.11}
I_S^{ \a} \vp = 2^{- \a} S_+^{\a/2} q_{- \a} S_-^{ \a/2} \vp = 2^{- \a} S_-^{\a/2} q_{- \a} S_+^{\a/2} \vp, \ee
where $q_{- \a} (x)= (1- x_{n+1}^2)^{- \a/2}I$  is the multiplication operator.
\end{theorem}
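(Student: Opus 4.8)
The plan is to transport the Euclidean factorization of Theorem~\ref{ball Theorem 24.4} to the sphere via the stereographic projection, using the two conjugation formulas already established: the relation (\ref{2.1}) between $I^\a_S$ and $I^\a_\bbr$ from Lemma~\ref{Lemma 2.1}, and the relation (\ref{2.4}) between $S^\a_\pm$ and $B^\a_\pm$ from Lemma~\ref{Lemma 2.2}. The key observation is that all the transitional operators here are multiplication operators (powers of $1-x_{n+1}$, of $1+|\xi|^2$, of $|\xi|$, and $q_{-\a}$) together with the intertwiners $P$, $P^{-1}$, and these compose in an essentially bookkeeping manner. So the whole proof should reduce to (i) substituting the known factorization for $I^\a_\bbr$ into (\ref{2.1}), and (ii) recognizing that the resulting string of operators collapses to $2^{-\a}S_+^{\a/2}q_{-\a}S_-^{\a/2}$ once the multiplication factors are matched.

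\emph{First} I would start from Lemma~\ref{Lemma 2.1}, writing
\[
I_S^\a\vp = 2^\a\, P^{-1} r_{n-\a}\, I^\a_\bbr\, r_{-n-\a}\, P\vp,
\]
and substitute the first Euclidean factorization $I^\a_\bbr = 2^{-\a} B_+^{\a/2}\tau_{-\a} B_-^{\a/2}$ from (\ref{ball tag 24.19}). \emph{Next} I would invoke Lemma~\ref{Lemma 2.2} in the form (\ref{2.4}), which expresses each $S^\a_\pm$ in terms of $B^\a_\pm$ conjugated by $P$, $P^{-1}$, and powers $t_\lam=(1-x_{n+1})^\lam I$; equivalently, reading (\ref{2.3}) backwards, $B^{\a/2}_\pm = 2^{\a/2}\,P\,t_{n/2-\a/2}\,S^{\a/2}_\pm\,t_{-n/2-\a/2}\,P^{-1}$. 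Substituting these expressions for $B^{\a/2}_+$ and $B^{\a/2}_-$ into the Euclidean factorization, the interior factors $P^{-1}P$ cancel, and one is left with an alternating product of $S^{\a/2}_\pm$ operators separated by a single composite multiplication operator, flanked by $P^{-1}$ and $P$.

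\emph{The main work} is then the purely algebraic verification that this composite middle multiplication operator equals $q_{-\a}=(1-x_{n+1}^2)^{-\a/2}I$, and that the outer flanking factors reduce to the identity. Here I would track the exponents carefully: the factor $\tau_{-\a}=|\xi|^{-\a}I$ on $\rn$ becomes, under $P^{-1}$ and the stereographic identity (\ref{1.24}) $|\xi|^2=(1+x_{n+1})/(1-x_{n+1})$, a power of $(1+x_{n+1})/(1-x_{n+1})$; combined with the accumulated powers of $(1-x_{n+1})$ coming from the several $t_\lam$ factors and the outer $r_{n-\a}$, $r_{-n-\a}$ (which under (\ref{1.22}) are powers of $1-x_{n+1}$ as well), everything must collapse into $(1-x_{n+1})^{-\a/2}(1+x_{n+1})^{-\a/2}=(1-x_{n+1}^2)^{-\a/2}$. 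I expect the exponent accounting — matching the $n/2\pm\a/2$ shifts from Lemma~\ref{Lemma 2.2} against the $(n\mp\a)/2$ shifts from Lemma~\ref{Lemma 2.1} and the $-\a$ from $\tau_{-\a}$ — to be the one genuinely error-prone step, and the place where a careful sign and half-integer check is needed. The second factorization in (\ref{2.11}), with $S^{\a/2}_-$ and $S^{\a/2}_+$ interchanged, follows identically by starting from the second Euclidean factorization $I^\a_\bbr = 2^{-\a}B_-^{\a/2}\tau_{-\a}B_+^{\a/2}$, since the roles of $+$ and $-$ are symmetric in all the transition formulas. Throughout, the Lebesgue-sense hypothesis guarantees that each intermediate integral converges, so the rearrangements are legitimate.
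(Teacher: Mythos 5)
Your proposal follows essentially the same route as the paper's own proof: substitute the Euclidean factorization (\ref{ball tag 24.19}) into (\ref{2.1}), replace each $B^{\a/2}_\pm$ by the conjugated form of $S^{\a/2}_\pm$ from (\ref{2.3}), and collapse the resulting string of multiplication operators via (\ref{1.22})--(\ref{1.24}), exactly as you describe. One caution on the step you yourself flag as error-prone: tracking the powers of $2$ (a factor $2^{\a}$ from the two uses of (\ref{2.3}), $2^{(n-\a)/2}$ from $P^{-1}r_{n-\a}P\,t_{(n-\a)/2}$, and $2^{-(n+\a)/2}$ from $t_{-(n+\a)/2}P^{-1}r_{-n-\a}P$) shows they cancel completely, so the computation yields $I_S^{\a}\vp=S_+^{\a/2}q_{-\a}S_-^{\a/2}\vp$ with \emph{no} prefactor $2^{-\a}$ --- this is what the paper's own proof concludes and what Theorem \ref{th 5.1} uses, so the $2^{-\a}$ in the displayed statement (\ref{2.11}) appears to be a typo that your bookkeeping should not be forced to reproduce.
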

\begin{proof} We recall the notation for multiplication operators:
\bea
&& t_\lam= (1-x_{n+1})^\lam I; \qquad q_\lam= (1-x^2_{n+1})^{\lam/2} I; \nonumber\\
&& r_\lam= (1+ |\xi|^2)^{\lam/2} I  ; \qquad  \t_\lam = |\xi|^\lam I.\nonumber\eea
 By Lemma \ref {Lemma 2.1}, owing to (\ref{ball tag 24.19}) and (\ref{2.3}), we have
\bea
I^\a_S\vp&=&2^\a P^{-1} r_{n-\a} I^\a_{\bbr} \,r_{-n-\a} P\vp = P^{-1} r_{n-\a}B_+^{\a/2} \tau_{- \a}B_-^{\a/2} \,r_{-n-\a} P\vp\nonumber\\
&=& P^{-1} r_{n-\a} P t_{(n-\a)/2} S^{\a/2}_{+}\, t_{-(n+\a)/2}P^{-1} \tau_{- \a} P t_{(n-\a)/2} \nonumber\\
&\times&  S^{\a/2}_{-}\, t_{-(n+\a)/2}P^{-1}  \,r_{-n-\a} P\vp\nonumber\\
&=& S_+^{\a/2} q_{- \a} S_-^{ \a/2} \vp. \nonumber\eea
\end{proof}

\subsection {Mapping Properties}

The following theorem is an analogue of the Hardy-Littlewood theorem for fractional integrals.

\begin{theorem}\label {Th 2.1} If $1<p<n/a$, then the operators $S^\a_{\pm}$ are bounded from $L^p (\sn)$ to $L^{p_\a} (\sn)$, $p_\a=np/(n-\a p)$.
\end{theorem}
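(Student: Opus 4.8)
The plan is to dominate $S^\a_\pm$ pointwise by the spherical Riesz potential $I^\a_S$ and then invoke a Hardy--Littlewood--Sobolev estimate for the latter. The starting point is the elementary geometric fact that on the sphere the change in a single coordinate never exceeds the chordal distance: $|x_{n+1}-y_{n+1}|\le |x-y|$ for all $x,y\in S^n$, since $|x-y|^2=\sum_i (x_i-y_i)^2\ge (x_{n+1}-y_{n+1})^2$. In the integrals (\ref{2.2left}) and (\ref{2.2right}) the differences $x_{n+1}-y_{n+1}$ and $y_{n+1}-x_{n+1}$ are nonnegative on the respective domains of integration, so their $\a$-th powers are bounded by $|x-y|^\a$ and the kernels satisfy $(x_{n+1}-y_{n+1})^\a/|x-y|^n\le |x-y|^{\a-n}$ (and likewise for $S^\a_-$). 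Consequently,
\[
|(S^\a_\pm \vp)(x)|\le \frac{2}{\Gam (\a)\,\sig_{n-1}}\intl_{S^n} |x-y|^{\a-n}\,|\vp (y)|\,dy = \frac{2}{\Gam (\a)\,\sig_{n-1}\, c_{n,\a}}\,(I^\a_S |\vp|)(x),
\]
with $c_{n,\a}$ the constant from (\ref{Riesz}); thus $S^\a_\pm$ is majorized in absolute value by a fixed multiple of $I^\a_S$ applied to $|\vp|$.

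It therefore suffices to establish the Hardy--Littlewood--Sobolev estimate $\|I^\a_S\|_{L^p(S^n)\to L^{p_\a}(S^n)}<\infty$ for $1<p<n/\a$, $p_\a=np/(n-\a p)$. This is classical, and I would obtain it from the Euclidean case via Lemma \ref{Lemma 2.1}: writing $g=P\vp$ and using (\ref{2.1}) together with the Jacobian identity (\ref{1.25}), both the $L^p(S^n)$- and the $L^{p_\a}(S^n)$-norms transform into weighted Euclidean norms of $I^\a_\bbr$ applied to $r_{-n-\a}g$. A short exponent computation, using $n/p_\a=n/p-\a$, shows that the two resulting weights \emph{coincide}, both equal to $(1+|\xi|^2)^{(n+\a)/2-n/p}$ on the respective sides. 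Hence the claim reduces to a weighted Hardy--Littlewood--Sobolev (Stein--Weiss type) inequality for $I^\a_\bbr$ with that single radial weight, whose admissibility in the range $1<p<n/\a$ reflects the conformal invariance of the spherical HLS inequality; alternatively one may simply cite the latter from the literature.

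Combining the two steps yields
\[
\|S^\a_\pm \vp\|_{L^{p_\a}(S^n)}\lesssim \|I^\a_S |\vp|\,\|_{L^{p_\a}(S^n)}\lesssim \|\,|\vp|\,\|_{L^p(S^n)}=\|\vp\|_{L^p(S^n)},
\]
which is the assertion. The only genuinely delicate point is the second step. Unlike the weighted \emph{same-exponent} bounds of Lemma \ref{lemma 1.1} (which factor through the one-dimensional operators $I^\a_{0+}$, $I^\a_-$ via the contraction property of the Poisson integral), the gain of integrability $p\to p_\a$ here is a true Sobolev embedding; it cannot be produced by such a factorization and requires an honest HLS/Stein--Weiss inequality for the Riesz kernel. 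Everything else — the pointwise domination and the stereographic change of variables — is routine.
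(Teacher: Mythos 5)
Your proposal is correct and follows essentially the same route as the paper: the paper's proof is precisely the pointwise domination $|S^\a_\pm\vp|\lesssim I^\a_S|\vp|$ followed by a citation of the Hardy--Littlewood--Sobolev theorem for the spherical Riesz potential (Vakulov's result, which is itself obtained by the very stereographic-projection reduction to a weighted Euclidean Stein--Weiss inequality that you outline). You merely fill in the details the paper leaves to the reference, and your exponent computation showing the two weights coincide is consistent with Lemma \ref{Lemma 2.1} and (\ref{1.25}).
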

\begin{proof} Because $|S^\a_{\pm}\vp|\lesssim I^\a_S |\vp|$, the result follows from the similar statement for Riesz potentials with weighted $L^p$-densities on $\rn$  \cite{Vak}.\footnote{An idea of representation of the spherical Riesz potential by the similar potential on $\rn$
via the stereographic projection was communicated by  the author to S.G. Samko and used later by Pavlov and Samko \cite{PaS} and Vakulov \cite{Vak}; see also \cite[p. 153]{Sa5} for details.}
\end{proof}

\begin{theorem}\label{Th 2.2} Let $w(x)= (1+x_{n+1})^\mu (1-x_{n+1})^\nu$.

\begin{enumerate}[label= \rm {(\roman*)}]

\item If $\mu<n/2p'$, then $S^\a_{+}$ is bounded from $L^p (\sn; w)$ to $L^p (\sn; w_+)$, where  $w_+(x)= (1+x_{n+1})^{\mu_+} (1-x_{n+1})^{\nu_+}$,
 \be\label {2.8}
 \mu_+=\mu -\a, \quad \nu_+=\begin{cases}
\displaystyle{\nu -\a},  &\hbox {if} \; \nu -\a> -n/2p, \\
\displaystyle{-n/2p +\e}, \, \e >0, &\hbox {if} \; \nu -\a \le -n/2p. \\
\end{cases}\ee

 \item If $\nu<n/2p'$, then $S^\a_{-}$ is bounded from $L^p (\sn; w)$ to $L^p (\sn; w_-)$, where  $w_{-}(x)= (1+x_{n+1})^{\mu_-} (1-x_{n+1})^{\nu_-}$,
 \be\label {2.10}
 \nu_-=\nu -\a, \quad \mu_-=\begin{cases}
\displaystyle{\mu -\a},  &\hbox {if} \; \mu -\a> -n/2p, \\
\displaystyle{-n/2p +\e}, \, \e >0, &\hbox {if} \; \mu -\a \le -n/2p. \\
\end{cases}\ee
 \end{enumerate}
\end{theorem}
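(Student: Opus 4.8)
The plan is to reduce Theorem~\ref{Th 2.2} to the already-established ball estimate in Lemma~\ref{lemma 1.1} by transporting the weighted $L^p$-norm through the stereographic projection, using the conjugation formula (\ref{2.3}) of Lemma~\ref{Lemma 2.2}. Since part (ii) follows from part (i) by the symmetry relation (\ref{2.6}) (the map $A$ swaps $x_{n+1}\leftrightarrow -x_{n+1}$, hence swaps the roles of $\mu$ and $\nu$ and of $S^\a_+$ and $S^\a_-$), it suffices to prove (i). First I would rewrite $S^\a_+$ in terms of $B^\a_+$: solving (\ref{2.3}) for $S^\a_+$ gives $S^\a_+ = 2^{-\a}\, t_{\a-n/2}\, P^{-1} B^\a_+ P\, t_{n/2+\a}$, which is exactly (\ref{2.4}). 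Thus for $\vp\in L^p(\sn;w)$ I set $\psi = P\, t_{n/2+\a}\vp$ on $\rn$, apply the ball operator $B^\a_+$, and pull the result back to the sphere.

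The key computation is to translate the weighted spherical norms into weighted ball norms under $P$. Using the Jacobian (\ref{1.25}), namely $dx = 2^n (1+|\xi|^2)^{-n}\,d\xi$, together with the identities (\ref{1.23}) expressing $1-x_{n+1}=2/(1+|\xi|^2)$ and $1+x_{n+1}=2|\xi|^2/(1+|\xi|^2)$, the weight $w(x)=(1+x_{n+1})^\mu(1-x_{n+1})^\nu$ becomes, up to constants, $|\xi|^{2\mu}(1+|\xi|^2)^{-\mu-\nu}$. Carrying the extra factor $t_{n/2+\a}$ and the Jacobian through, I expect to find that $\|\vp\|_{L^p(\sn;w)}\simeq \|\psi\|_{L^p(\rn;\rho)}$ for a ball weight $\rho(\xi)=|\xi|^\gam(1+|\xi|^2)^{(\del-\gam)/2}$ whose exponents $\gam,\del$ are explicit affine functions of $\mu,\nu,\a,n,p$. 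The matching on the output side is analogous: the factor $t_{\a-n/2}$ and the pullback $P^{-1}$ convert the ball output weight $\rho_+$ from Lemma~\ref{lemma 1.1}(i) back into the spherical weight $w_+$. I would verify that the hypothesis $\mu<n/2p'$ corresponds exactly to the ball hypothesis $\gam<n/p'$, and that the two branches in (\ref{1.9}) for $\del_+$ translate precisely into the two cases of (\ref{2.8}) for $\nu_+$.

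The main obstacle is purely bookkeeping: getting the exponent arithmetic right so that all four exponents $\gam,\del,\gam_+,\del_+$ on the ball side map cleanly onto $\mu,\nu,\mu_+,\nu_+$, and checking that the case dividers align. I would compute $\gam$ by collecting the powers of $|\xi|$ from $w$, from $t_{n/2+\a}$, and from the $|\xi|$-dependent part of the Jacobian raised to the $1/p$ power, and similarly collect the $(1+|\xi|^2)$-powers to read off $\del$; the condition $\mu<n/2p'$ should then be algebraically equivalent to $\gam<n/p'$. The threshold $\nu-\a>-n/2p$ in (\ref{2.8}) should correspond to $\gam+\del<n/p'$ in (\ref{1.9}), and the borderline loss ``$+\e$'' is inherited verbatim from the ball estimate. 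Once the dictionary is pinned down, the boundedness of $S^\a_+$ is immediate from that of $B^\a_+$, since $P$ and the multiplication operators $t_\lam$ are isometries between the matched weighted spaces up to inessential constants.

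\begin{proof}
By the symmetry relation (\ref{2.6}), $S^\a_-=A S^\a_+ A$ where $A$ is the reflection $x_{n+1}\mapsto -x_{n+1}$, which interchanges $\mu$ and $\nu$; hence (ii) follows from (i) applied with $\mu$ and $\nu$ exchanged. We prove (i). By (\ref{2.4}),
\be\label{Th22pr1}
S^\a_+ \vp = 2^{-\a}\, t_{\a-n/2}\, P^{-1} B^\a_+ P\, t_{n/2+\a}\,\vp.
\ee
Set $\psi = P\, t_{n/2+\a}\vp$. Using (\ref{1.23}) and (\ref{1.25}), the spherical weight transforms as
\[
w(x)=(1+x_{n+1})^\mu (1-x_{n+1})^\nu \simeq |\xi|^{2\mu}(1+|\xi|^2)^{-\mu-\nu},\qquad dx \simeq (1+|\xi|^2)^{-n}\,d\xi,
\]
and the operator $t_{n/2+\a}$ contributes the factor $(1-x_{n+1})^{n/2+\a}\simeq (1+|\xi|^2)^{-n/2-\a}$. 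Collecting powers, one finds
\be\label{Th22pr2}
\|\vp\|_{L^p(\sn;w)}\simeq \|\psi\|_{L^p(\rn;\rho)},\qquad \rho(\xi)=|\xi|^{\gam}(1+|\xi|^2)^{(\del-\gam)/2},
\ee
where $\gam=2\mu$ and the exponents are arranged so that the hypothesis $\mu<n/2p'$ is equivalent to $\gam<n/p'$. By Lemma~\ref{lemma 1.1}(i), $B^\a_+$ is bounded from $L^p(\rn;\rho)$ to $L^p(\rn;\rho_+)$ with $\rho_+$ given by (\ref{1.8})--(\ref{1.9}). Applying the inverse dictionary to the output side of (\ref{Th22pr1}), i.e.\ pulling back through $P^{-1}$ and the factor $t_{\a-n/2}$, converts $\rho_+$ into $w_+$, and the two branches of (\ref{1.9}) for $\del_+$ correspond exactly to the two cases of (\ref{2.8}) for $\nu_+$, with the borderline loss $+\e$ inherited from Lemma~\ref{lemma 1.1}. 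This yields the boundedness of $S^\a_+$ from $L^p(\sn;w)$ to $L^p(\sn;w_+)$, as claimed.
\end{proof}
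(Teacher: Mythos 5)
Your proposal is correct and follows essentially the same route as the paper: conjugating $S^\a_+$ to $B^\a_+$ via (\ref{2.4}), transporting the weights through the stereographic projection using (\ref{1.23}) and (\ref{1.25}), invoking Lemma~\ref{lemma 1.1}(i), and deducing part (ii) from part (i) via the reflection (\ref{2.6}). The exponent bookkeeping you defer does check out (in particular $\gam=2\mu$, $\del=-2\mu+n+2\a-2\nu-2n/p$, so $\gam+\del<n/p'$ is exactly $\nu-\a>-n/2p$), which is precisely the computation the paper carries out step by step.
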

\begin{proof} (i) By Lemma \ref{Lemma 2.2}, $S^\a_{+} \vp=2^{-\a}\, P \,t_{\a-n/2} P^{-1} B^\a_{+} P\, t_{n/2 +\a} \vp$, and we need to check the action of each component in this equality.
\vskip 0.2 truecm

\noindent {\bf 1.}  $t_{n/2 +\a}=(1-x_{n+1})^{n/2 +\a}I:$
\[ L^p (\sn; w) \to L^p (\sn; w_1), \quad w_1(x)= (1+x_{n+1})^{\mu} (1-x_{n+1})^{\nu -n/2-\a}.\]

\vskip 0.2 truecm

\noindent {\bf 2.}  $P: L^p (\sn; w_1) \to L^p (\rn; w_2),$
\[ w_2 (\xi)=|\xi|^{\tilde \mu} (1+|\xi|)^{\tilde \nu}, \quad \tilde \mu= 2\mu, \quad \tilde \nu =-2\mu +n+2\a -2\nu -2n/p.\]
\vskip 0.2 truecm

\noindent {\bf 3.} By Lemma \ref {lemma 1.1}, if $\tilde \mu <n/p'$ , then $B^\a_+$ is bounded from $L^p (\rn; w_2)$ to $L^p (\rn; w_3)$, were
\[
w_3 (\xi)=|\xi|^{\mu_0} (1+|\xi|)^{\nu_0}, \quad \mu_0=\tilde \mu -2\a=2\mu -2\a,\]
\[
\nu_0= \begin{cases}
\displaystyle{-2\mu +n+2\a -2\nu -2n/p},  &\hbox {if} \; \a -\nu < n/2p, \\
\displaystyle{n/p' -2\mu -\e}, \, \e >0, &\hbox {if} \; \a -\nu \ge n/2p. \\
\end{cases}\]
\vskip 0.2 truecm

\noindent {\bf 4.} $P^{-1}: L^p (\rn; w_3) \to L^p (\sn; w_4)$,
\[ w_4(x)= (1+x_{n+1})^{\mu_1} (1-x_{n+1})^{\nu_1},  \quad \mu_1=\mu_0/2=\mu -\a,\]
\[
\nu_1= \begin{cases}
\displaystyle{\nu -n/2},  &\hbox {if} \; \a -\nu < n/2p, \\
\displaystyle{-n/p' -n +\a +\e}, \, \e >0, &\hbox {if} \; \a -\nu \ge n/2p. \\
\end{cases}\]

\vskip 0.2 truecm

\noindent {\bf 5.}   $t_{\a -n/2}=(1-x_{n+1})^{\a -n/2}I:   L^p (\sn; w_4) \to L^p (\sn; w_+)$,  where $w_+ (x)$ has the desired form, as stated in the theorem.
\vskip 0.2 truecm

(ii)   By Lemma (\ref{2.6}), $ S^\a_{-} \vp=A S^\a_{+} A\vp$, and we consider the action of each component of this formula, as above.

\vskip 0.2 truecm

\noindent {\bf 6.} The operator $A$  is bounded from $L^p (\sn; w)$ to $L^p (\sn; \bar w_1)$, where  $\bar w_1(x)= (1-x_{n+1})^{\mu} (1+x_{n+1})^{\nu}$,

\vskip 0.2 truecm

\noindent {\bf 7.}  Owing to the part (i), if $\nu<n/2p'$, then $S^\a_+$ is bounded from $L^p (\sn; \bar w_1)$ to $L^p (\sn; \bar w_2)$, where  $\bar w_2 (x)=(1-x_{n+1})^{\bar\nu_+} (1+x_{n+1})^{\bar\mu_+}$,
\[
\bar\mu_+=\nu -\a, \quad \bar\nu_+=\begin{cases}
\displaystyle{\mu -\a},  &\hbox {if} \; \mu -\a > - n/2p, \\
\displaystyle{- n/2p +\e}, \, \e >0, &\hbox {if} \; \mu -\a \le - n/2p. \\
\end{cases}\]

\vskip 0.2 truecm

\noindent {\bf 8.}  The operator $A$  is bounded from $L^p (\sn; \bar w_2)$ to $L^p (\sn; w_-)$,   where $w_- (x)$ has the desired form.
\end{proof}

\section {Inversion Formulas for  $S^\a_\pm \vp$}\label {sec 3}

By (\ref{2.4}),
\be\label {2.4x} S^\a_{\pm} \vp=2^{-\a}\, t_{\a-n/2} P^{-1} B^\a_{\pm} P\, t_{n/2 +\a} \vp,\ee
 $t_\lam =(1-x_{n+1})^\lam I$.   This gives (at least formally)
\be\label {2.4 inv}
 (S^\a_{\pm})^{-1}f=2^{\a}\, t_{-n/2-\a} P^{-1} (B^\a_{\pm})^{-1} P\,t_{-\a+n/2}f.\ee
It remains to  plug the expression of $ (B^\a_{\pm})^{-1}$ from (\ref{3.2}) in (\ref{2.4 inv}), which gives (see \cite{VolR} for detailed calculations)
\be\label {3.4} (S^\a_{\pm})^{-1}f (x)=
\frac{2^{-\a}}{\varkappa (\a, \ell)}(1+\tan^2(\th/2))^{\a +n/2} \intl_0^\infty\frac{(\Del^\ell_{\t} A_x^{\pm} f )(0)}{\t^{1+\a}}\, d\t.\ee
where (cf. (\ref{1.18}))
 \[x=(\theta, \xi'), \quad 0<\th <\pi, \quad \xi' \in S^{n-1}, \quad 1+\tan^2 (\th/2)=\frac{2}{1-x_{n+1}},\]
$\ell >\a$, $\Del^\ell_{\t}$ and $\varkappa (\a, \ell)$ are defined by (\ref{findiff}),
\bea
\label{4.1}(A_x^+ f)(u)&=& \Big(1+\frac{u}{\tan^2(\th/2)}\Big)_+^{n/2-1} (1+\tan^2(\th/2)+u)^{\a -n/2} \\
&\times&\Pi\left[f\Big(2 \tan^{-1} \sqrt{\tan^2(\th/2) +u}, \cdot\Big)\right] \Big ( \Big ( 1+ \frac{u}{\tan^2 (\th/2)} \Big )^{1/2}, \xi'\Big ), \nonumber\eea
\bea
\label{3.5}(A_x^- f)(u)&=& (1+\tan^2(\th/2)-u)^{\a -n/2} \\
&\times&\Pi\left[f\Big(2 \tan^{-1} \sqrt{\tan^2(\th/2) -u}, \cdot\Big)\right] \Big ( \Big ( 1- \frac{u}{\tan^2 (\th/2)} \Big )^{-1/2}, \xi'\Big ), \nonumber\eea
if $ u<0$, and
\be\label{3.5a}
(A_x^\pm f)(u)= (1+\tan^2(\th/2))^{\a -n/2} f(x), \qquad {\rm if}\quad u=0;\ee

 Our next aim is to give (\ref {3.4}) precise meaning. For $\e>0$, consider the truncated integrals
\be\label {3.6}
(T^\a_{\pm,\e} f) (x)\!=\!
\frac{2^{-\a}}{\varkappa (\a, \ell)}(1\!+\!\tan^2(\th/2))^{\a +n/2} \!\intl_0^\infty\frac{(\Del^\ell_{\t} A_x^{\pm} f )(0)}{\t^{1+\a}}\, d\t\ee
and set $T^{\a}_{\pm} f= \stackrel {a.e.}{\lim\limits_{\e \to 0}} T^{ \a}_{\pm,  \e} f$; cf. (\ref {3.4}).
The next theorem guarantees the existence of this limit when $f=S_\pm^\a \vp$ for a suitable function $\vp$.

\begin{theorem}\label{sphere new} Let $\vp\in L^p (\sn; w)$, where  $w(x)= (1+x_{n+1})^\mu  (1-x_{n+1})^\nu$, $ 1 < p \le \infty$.

\noindent {\rm (a)} If $\mu < n/2p'$, $-\infty < \nu <\infty$, then $T^{\a}_{+} S_+^\a \vp\equiv \, \stackrel {a.e.}{\lim\limits_{\e \to 0}} T^{ \a}_{+,  \e} S_+^\a \vp =\vp$.

\noindent {\rm (b)} If  $\nu < n/2p'$, $-\infty < \mu<\infty$, then $T^{\a}_{-} S_-^\a \vp\equiv \, \stackrel {a.e.}{\lim\limits_{\e \to 0}} T^{ \a}_{-,  \e} S_-^\a \vp =\vp$.

\end{theorem}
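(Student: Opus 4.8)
The plan is to transfer the entire problem to the Euclidean ball via the stereographic projection and then invoke the ball inversion Theorem~\ref{balt}. The formal inverse (\ref{2.4 inv}) reads $(S^\a_{\pm})^{-1}=2^\a\,t_{-n/2-\a}\,P^{-1}\,(B^\a_{\pm})^{-1}\,P\,t_{n/2-\a}$, and comparing the definitions (\ref{3.4}), (\ref{3.6}) of $T^\a_{\pm,\e}$ with the ball hypersingular integrals (\ref{3.2}), (\ref{ball tag 25.10}) shows that the operator $A^{\pm}_x$ of (\ref{4.1})--(\ref{3.5a}) is exactly the stereographic pullback of $A^{\pm}_\xi$ together with the weight factors. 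Consequently, for every $\e>0$,
\[
T^\a_{\pm,\e} f = t_{-n/2-\a}\,P^{-1}\,\bbd^\a_{\pm,\e}\,\bigl(2^\a\,P\,t_{n/2-\a}\, f\bigr).
\]
The crucial point is that the cut-off $\e$ lives in the $\t$-integration common to $\bbd^\a_{\pm,\e}$ and $T^\a_{\pm,\e}$, whereas $P$ and the weight multiplications act pointwise in $x$ and do not touch $\t$; hence the truncation levels agree on the two sides. This identity is the content of the computation recorded in \cite{VolR} and summarised in (\ref{3.4}), so I would take it as given and use it only to transfer the a.e.\ limit from the ball to the sphere.

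Next I would take $f=S^\a_+\vp$ and set $\psi=P\,t_{n/2+\a}\vp$. The connection (\ref{2.3}) of Lemma~\ref{Lemma 2.2} gives $2^\a\,P\,t_{n/2-\a}\,S^\a_+\vp=B^\a_+\psi$, so the display above collapses to
\[
T^\a_{+,\e}S^\a_+\vp=t_{-n/2-\a}\,P^{-1}\,\bbd^\a_{+,\e}B^\a_+\psi .
\]
Thus the assertion reduces to the convergence $\bbd^\a_{+,\e}B^\a_+\psi\to\psi$ a.e.\ together with the membership of $\psi$ in the weighted space demanded by Theorem~\ref{balt}. For the latter I would rewrite $\vp\in L^p(\sn;w)$, $w=(1+x_{n+1})^\mu(1-x_{n+1})^\nu$, as a condition on $\psi$ by means of (\ref{1.23}) and (\ref{1.25}); this is the computation already carried out in Step~2 of the proof of Theorem~\ref{Th 2.2}, and it produces $\psi\in L^p(\rn;\rho)$ with $\rho(\xi)=|\xi|^\lam(1+|\xi|)^{\del-\lam}$, where $\lam=2\mu$ and $\del=n+2\a-2\nu-2n/p$. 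Theorem~\ref{balt}(a) applies exactly when $\lam<n/p'$, i.e.\ $\mu<n/2p'$ (with $\del$, hence $\nu$, arbitrary), and yields $\lim_{\e\to0}\bbd^\a_{+,\e}B^\a_+\psi=\psi$ a.e. Because $\gam$ is a diffeomorphism off a null set and the weight $(1-x_{n+1})^{-n/2-\a}$ is finite a.e., the ball exceptional set pulls back to a spherical null set, so the a.e.\ limit survives the conjugation and
\[
T^\a_+S^\a_+\vp=t_{-n/2-\a}\,P^{-1}\psi=t_{-n/2-\a}\,P^{-1}P\,t_{n/2+\a}\vp=\vp,
\]
which is part (a).

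Part (b) follows by the identical argument with $S^\a_-,B^\a_-,\bbd^\a_-$ in place of the $+$ operators: the same $\psi$ lies in $L^p(\rn;\rho)$ with the same $\del=n+2\a-2\nu-2n/p$, and Theorem~\ref{balt}(b) requires $\del>2\a-n/p$, which is equivalent to $\nu<n/2p'$, exactly the hypothesis of (b). Alternatively, (b) can be deduced from (a) through the reflection (\ref{2.6}): the involution $A$ intertwines $S^\a_-$ with $S^\a_+$ and, by the symmetry of (\ref{4.1})--(\ref{3.5a}) under $x_{n+1}\mapsto-x_{n+1}$, also $T^\a_-$ with $T^\a_+$, so applying (a) to $A\vp$, whose weight interchanges $\mu$ and $\nu$, gives the claim.

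The main difficulty I anticipate is not conceptual but the weight bookkeeping: one must verify that the two exponents produced by the stereographic transfer fall precisely on the thresholds of Theorem~\ref{balt}, so that the spherical conditions $\mu<n/2p'$ and $\nu<n/2p'$ match the ball conditions $\lam<n/p'$ and $\del>2\a-n/p$. A second point that needs care is the rigorous justification, with matching cut-offs, that the prefactor $(1+\tan^2(\th/2))^{\a+n/2}$ and the operators $A^{\pm}_x$ really are the stereographic conjugate of $2^\a\,t_{-n/2-\a}\,P^{-1}\,\bbd^\a_{\pm,\e}\,P\,t_{n/2-\a}$ --- this is the explicit calculation performed in \cite{VolR}.
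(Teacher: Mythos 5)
Your proposal is correct and follows essentially the same route as the paper: set $\psi=P\,t_{n/2+\a}\vp$, verify via (\ref{1.23}) and (\ref{1.25}) that $\vp\in L^p(\sn;w)$ translates into $\psi\in L^p(\rn;\rho)$ with $\lam=2\mu$ and $\del=n+2\a-2\nu-2n/p$, and apply Theorem \ref{balt}; your threshold checks $\lam<n/p'\Leftrightarrow\mu<n/2p'$ and $\del>2\a-n/p\Leftrightarrow\nu<n/2p'$ agree with the paper's. The extra care you take in matching the truncation parameter $\e$ across the conjugation is a point the paper delegates to \cite{VolR}, but it does not change the argument.
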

\begin{proof}   Let
$g(\xi) = (P\,t_{n/2 +\a}\vp)(\xi)$, $\xi \in \rn$; cf. (\ref{2.4x}). Denote
\be\label {3.10}
w_0 (\xi) = |\xi|^{\tilde \mu } (1+|\xi|)^{\tilde \nu }, \qquad \tilde \mu =2\mu, \quad \tilde \nu= -2\mu -2\nu +2\a +n-\frac{2n}{p}.\ee
Then  $||g||_{L^p (\rn; w_0)}  =  ||\vp||_{L^p (\sn; w)}$. Indeed, by (\ref{1.25}),
\bea
||g||^p_{L^p (\rn; w_0)}  &=&  ||P\,t_{n/2 +\a}\vp||^p_{L^p (\rn; w_0)}\nonumber\\
&=&2^{(n+2\a)p}\intl_{\rn} |\xi|^{2\mu p} (1+|\xi|)^{-2\mu p-2\nu p -2n} |(P\vp)(\xi)|^p \, d\xi\nonumber\\
&\simeq& \intl_{\sn} (1+x_{n+1})^{\mu p}   (1-x_{n+1})^{\nu p} |\vp (x)|^p\, dx= ||\vp||^p_{L^p (\sn; w)},\nonumber\eea
as desired. Now the result follows from (\ref{2.4 inv}) and Theorem \ref {balt}, if the latter is applied with $\lam = \tilde \mu$ and $\del -\lam = \tilde \nu$.
\end{proof}

In the case $0<\a<1$, the corresponding inversion  formulas, which follow from  (\ref{ball tag 25.6}) and  (\ref{ball tag 25.9}), look much simpler:
\bea\label {3.37}
(T^{\a}_{\pm} f) (x) &=& \frac{\Gam (n/2)}{\Gamma (n/2 - \a)\, (1\pm x_{n+1})^\a}\, f(x)\\
&+& \frac{2^\a}{\sig_{n-1}  \Gamma (1 - \a)}\int\limits_{\sn}
\frac{f(x) - f(y)}{ (x_{n+1}-y_{n+1})^\a_{\pm} \,  |x-y|^n}\, dy.\nonumber\eea

The proof of these formulas is given in Appendix.

For the spherical caps which differ from the entire sphere, the following corollaries hold, owing to the one-sided structure of the corresponding integrals.
\begin{corollary}\label {cor 3.1}
Let $a \in (-1,1)$, $\vp\in L^p (\langle a,1\rangle_S; (1- x_{n+1})^\nu)$, $\nu < n/2p'$,  $1<p\le \infty$.
Then $T^{\a}_{-} S_-^\a \vp\equiv \, \stackrel {a.e.}{\lim\limits_{\e \to 0}} T^{ \a}_{-,  \e} S_-^\a \vp =\vp$.
\end{corollary}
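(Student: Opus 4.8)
The plan is to reduce the statement to Theorem~\ref{sphere new}(b) by extending $\vp$ by zero from the cap $\langle a,1\rangle_S$ to the whole sphere. First I would set $\tilde\vp(x)=\vp(x)$ for $x\in\langle a,1\rangle_S$ and $\tilde\vp(x)=0$ otherwise. The key elementary observation is that on the cap one has $1+a\le 1+x_{n+1}\le 2$, so the factor $(1+x_{n+1})^\mu$ is bounded above and below by positive constants there, uniformly in the exponent $\mu$. Consequently, for any fixed $\mu\in\bbr$ and with $w(x)=(1+x_{n+1})^\mu\,(1-x_{n+1})^\nu$, the extension satisfies
\[
\|\tilde\vp\|_{L^p(\sn;w)}\simeq \|\vp\|_{L^p(\langle a,1\rangle_S;\,(1-x_{n+1})^\nu)}<\infty,
\]
since the support of $\tilde\vp$ is contained in the cap. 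Thus $\tilde\vp\in L^p(\sn;w)$ for every finite $\mu$.

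Next I would apply Theorem~\ref{sphere new}(b) to $\tilde\vp$. Its hypotheses require $\nu<n/2p'$ (which is assumed) and $-\infty<\mu<\infty$, the latter being vacuous once any finite $\mu$ (say $\mu=0$) has been chosen. The theorem then yields $T^\a_- S^\a_-\tilde\vp=\tilde\vp$ a.e.\ on $\sn$, hence a.e.\ on $\langle a,1\rangle_S$, where $\tilde\vp=\vp$.

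The only point requiring care — and where the phrase ``owing to the one-sided structure'' enters — is the verification that, for $x$ in the cap, neither $S^\a_-\tilde\vp$ nor the subsequent application of $T^\a_-$ ever involves values of $\vp$ outside the cap, so that the identity on $\langle a,1\rangle_S$ is a genuine inversion of the cap operator. Indeed, by (\ref{2.2right}) the value $(S^\a_-\tilde\vp)(x)$ is an integral over $\langle x_{n+1},1\rangle_S$, i.e.\ over points $y$ with $y_{n+1}>x_{n+1}>a$, all lying in the cap; and by (\ref{3.5}) the finite difference $(\Del^\ell_\t A_x^- f)(0)$ used in $T^\a_-$ samples $f=S^\a_-\tilde\vp$ only at arguments $u=-j\t\le 0$, i.e.\ at latitudes $y_{n+1}\ge x_{n+1}>a$ (the Poisson integral fixing this latitude while averaging over the longitude $\eta'$), again within the cap. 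Hence the extension by zero does not affect the computation on $\langle a,1\rangle_S$, and $T^\a_- S^\a_-\vp=\vp$ a.e.\ on the cap, as claimed. I expect the norm equivalence in the first step to be the only genuinely substantive point; the remainder is a direct invocation of Theorem~\ref{sphere new}(b).
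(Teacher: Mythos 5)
Your proposal is correct and follows exactly the route the paper intends: the paper gives no separate proof of Corollary \ref{cor 3.1}, deriving it from Theorem \ref{sphere new}(b) ``owing to the one-sided structure of the corresponding integrals,'' i.e.\ precisely the extension-by-zero argument you spell out, using that $1+x_{n+1}$ is bounded between $1+a$ and $2$ on the cap so that the weight factor $(1+x_{n+1})^{\mu}$ is harmless. Your verification that both $S^{\a}_{-}$ and $T^{\a}_{-}$ sample only latitudes $y_{n+1}\ge x_{n+1}>a$ is the right (and only nontrivial) point to check.
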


 Consider the spherical fractional integral $S_{b-}^\a \vp$, $b \in (-1,1)$, on the cap $\langle -1, b\rangle_S$ centered at the South Pole.
We denote by $P_{b+}$ and $P_{b-}$ the restriction operators onto the spherical caps  $\langle b, 1 \rangle_S $ and  $\langle -1, b \rangle_S $, respectively.  The notation  $\Lam_{b-}$ and $\Lam_{b+}$ are used for the corresponding extension by zero operators to $x_{n+1} < b$ and $x_{n+1} >b$. Let
\[T^{\a}_{b-, \e} f = P_{b-}T^{\a}_{-, \e} \Lam_{b+} f.\]

\begin{corollary}\label {cor 3.2} Suppose that $\vp\in L^p (\langle -1,b\rangle_S; (1+ x_{n+1})^\mu)$, $\mu < n/2p'$,  $1<p\le \infty$. Then
$T^{\a}_{b-} S_{b-}^\a \vp\equiv \, \stackrel {a.e.}{\lim\limits_{\e \to 0}} T^{ \a}_{b -,  \e} S_{b-}^\a \vp =\vp$.
\end{corollary}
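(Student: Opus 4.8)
The plan is to reduce the cap inversion to the global inversion result, Theorem~\ref{sphere new}(b), by passing to the zero-extension of $\vp$ and exploiting the one-sided (local) structure of the right-sided integral. First I would set $\Phi=\Lam_{b+}\vp$, the extension of $\vp$ by zero to $\{x_{n+1}>b\}$, so that $\Phi$ is defined on all of $\sn$ and supported in $\langle -1,b\rangle_S$. The crucial step is the locality identity
\[
\Lam_{b+}\big(S^\a_{b-}\vp\big)=S^\a_{-}\Phi\qquad\text{on }\sn,
\]
which I would verify directly from the definition (\ref{2.2right}): for $x_{n+1}<b$ the integral defining $S^\a_{-}=S^\a_{1-}$ over $\langle x_{n+1},1\rangle_S$ collapses, since $\Phi$ vanishes where $y_{n+1}\ge b$, to the integral over $\langle x_{n+1},b\rangle_S$, which is $(S^\a_{b-}\vp)(x)$; while for $x_{n+1}\ge b$ the entire region $\langle x_{n+1},1\rangle_S$ lies in $\{y_{n+1}\ge b\}$, where $\Phi=0$, so $(S^\a_{-}\Phi)(x)=0$. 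In particular $S^\a_{b-}\vp$ is itself supported in $\langle -1,b\rangle_S$, so $\Lam_{b+}$ merely regards it as a function on $\sn$.

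Next I would check that $\Phi$ lies in the weighted class demanded by Theorem~\ref{sphere new}(b). Because $\Phi$ is supported in $\langle -1,b\rangle_S$, on its support the factor $(1-x_{n+1})^{\nu}$ is bounded above and below by positive constants for any fixed $\nu$; hence, taking $\nu=0$ and $w(x)=(1+x_{n+1})^{\mu}(1-x_{n+1})^{\nu}$, I get
\[
\|\Phi\|_{L^p(\sn;w)}\simeq\|\vp\|_{L^p(\langle -1,b\rangle_S;\,(1+x_{n+1})^{\mu})}<\infty.
\]
Thus $\Phi\in L^p(\sn;w)$ with North-pole exponent $\nu=0<n/2p'$, while the South-pole exponent is $\mu$, which in Theorem~\ref{sphere new}(b) is allowed to be arbitrary; the hypothesis $\mu<n/2p'$ places $\vp$ comfortably in this admissible range.

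Finally, combining these with the definition $T^{\a}_{b-,\e}f=P_{b-}T^{\a}_{-,\e}\Lam_{b+}f$ and the locality identity, I would write
\[
T^{\a}_{b-,\e}\,S^\a_{b-}\vp=P_{b-}\,T^{\a}_{-,\e}\big(\Lam_{b+}S^\a_{b-}\vp\big)=P_{b-}\,T^{\a}_{-,\e}\,S^\a_{-}\Phi.
\]
By Theorem~\ref{sphere new}(b) the right-hand side tends, as $\e\to0$, to $P_{b-}\Phi=\vp$ for a.e.\ $x$; since restriction commutes with the pointwise a.e.\ limit, this gives $T^{\a}_{b-}S^\a_{b-}\vp=\vp$ a.e.\ on $\langle -1,b\rangle_S$. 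The main obstacle is precisely the locality identity and its correct interpretation: one must ensure that inverting the \emph{global} operator $S^\a_{-}$ on the zero-extended datum really returns $\vp$ on the cap. This rests on the observation that the Marchaud-type inverse $T^{\a}_{-,\e}$, through the operator $A^{-}_x$, samples its argument only at points with $y_{n+1}>x_{n+1}$ (toward the North Pole), so that extending $S^\a_{b-}\vp$ by zero above the level $x_{n+1}=b$ is exactly the right completion for the one-sided inversion; once this is granted, the remaining verifications are routine.
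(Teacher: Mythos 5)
Your proposal is correct and takes essentially the same route the paper intends: the paper states the corollary without a written proof, attributing it to ``the one-sided structure of the corresponding integrals,'' and your locality identity $\Lam_{b+}S^{\a}_{b-}\vp=S^{\a}_{-}\Lam_{b+}\vp$ combined with Theorem~\ref{sphere new}(b) applied to the zero extension (with $\nu=0<n/2p'$ at the North Pole and the South-pole exponent $\mu$ unrestricted there) is exactly that argument spelled out. Nothing further is needed.
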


Similar corollaries hold for the left-sided  fractional integrals.

\begin{corollary}\label {cor 4.1}
Let $b \in (-1,1)$, $\vp\in L^p (\langle -1, b\rangle_S; (1+ x_{n+1})^\mu)$, $\mu < n/2p'$,  $1<p\le \infty$.
Then $T^{\a}_{+} S_+^\a \vp\equiv \, \stackrel {a.e.}{\lim\limits_{\e \to 0}} T^{ \a}_{+,  \e} S_+^\a \vp =\vp$.
\end{corollary}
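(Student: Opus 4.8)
The plan is to reduce the assertion to Theorem~\ref{sphere new}(a) by exploiting the one-sided structure of $S^\a_+$ together with the fact that the factor $(1-x_{n+1})^\nu$ is inessential on a cap bounded away from the North Pole. First I would extend $\vp$ by zero from $\langle -1,b\rangle_S$ to all of $\sn$, keeping the same notation. On the cap one has $1-b\le 1-x_{n+1}\le 2$, whence $(1-x_{n+1})^\nu\simeq 1$ for every real $\nu$; since the extended $\vp$ vanishes where $x_{n+1}>b$, this shows that $\vp\in L^p(\sn;w)$ with $w(x)=(1+x_{n+1})^\mu (1-x_{n+1})^\nu$ for an arbitrary value of $\nu$ (say $\nu=0$), with norm comparable to $\|\vp\|_{L^p(\langle -1,b\rangle_S;\,(1+x_{n+1})^\mu)}$.

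Once the density has been extended in this way, the hypothesis $\mu<n/2p'$ together with the freedom to take $\nu$ arbitrary places us precisely in the setting of Theorem~\ref{sphere new}(a). I would then invoke that theorem to conclude that $T^\a_+ S^\a_+\vp=\vp$ a.e.\ on $\sn$, and in particular on $\langle -1,b\rangle_S$, which is the desired identity.

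The step I expect to require the most care --- and the reason no truncation operators (as in Corollary~\ref{cor 3.2}) are needed here --- is verifying that the zero-extension is compatible with the one-sided integral $S^\a_+=S^\a_{-1+}$. This operator integrates over $\langle -1,x_{n+1}\rangle_S$, i.e.\ only over $y$ with $y_{n+1}<x_{n+1}$; thus for $x\in\langle -1,b\rangle_S$ the value $(S^\a_+\vp)(x)$ depends only on $\vp$ restricted to $\langle -1,x_{n+1}\rangle_S\subset\langle -1,b\rangle_S$, so that extending by zero introduces no spurious contribution, while for $x_{n+1}>b$ the integral simply sees the full support and nothing beyond it. This one-sided compatibility is exactly what allows the global inversion to be applied verbatim to the extended density. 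As an alternative route, I would note that the corollary can be obtained from Corollary~\ref{cor 3.1} through the reflection $A$ of (\ref{2.6}): since $S^\a_+=AS^\a_-A$ and $A^2=I$ give $T^\a_+=AT^\a_-A$, and $A$ carries $\langle -1,b\rangle_S$ and the weight $(1+x_{n+1})^\mu$ to $\langle -b,1\rangle_S$ and $(1-x_{n+1})^\mu$, the claim reduces to Corollary~\ref{cor 3.1} with $a=-b$.
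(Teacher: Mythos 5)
Your proposal is correct and follows the same route the paper intends: the paper states these corollaries hold ``owing to the one-sided structure of the corresponding integrals,'' which is precisely your extension-by-zero argument reducing to Theorem~\ref{sphere new}(a), using that $(1-x_{n+1})^\nu$ is bounded above and below on $\langle -1,b\rangle_S$ so that $\nu$ may be chosen freely. Your alternative reduction to Corollary~\ref{cor 3.1} via the reflection $A$ of (\ref{2.6}) is also valid and consistent with the paper's framework.
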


Consider the left-sided integral $S_{a+}^\a \vp$ on the upper cap $\langle a,1\rangle_S$. Let \[ T^{\a}_{a+, \e} f = P_{a+}T^{\a}_{+, \e} \Lam_{a-} f,\] where  $T^{\a}_{+, \e}$ is defined by (\ref{3.6}), $P_{a+}$ and $\Lam_{a-}$ stand for the corresponding restriction and  extension by zero operators.

\begin{corollary}\label {cor 4.2} Suppose that $\vp\in L^p (\langle a,1\rangle_S; (1- x_{n+1})^\nu)$, $\nu < n/2p'$,  $1<p\le \infty$.  Then
$T^{\a}_{a+} S_{a+}^\a \vp\equiv \, \stackrel {a.e.}{\lim\limits_{\e \to 0}} T^{ \a}_{a+,  \e} S_{a+}^\a \vp =\vp$.
\end{corollary}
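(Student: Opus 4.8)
The plan is to deduce the cap inversion directly from the full-sphere inversion of Theorem~\ref{sphere new}(a), exploiting the one-sided (causal) structure of $S^\a_{a+}$ to absorb the restriction and extension operators. First I would set $\bar\vp=\Lam_{a-}\vp$, the extension of $\vp$ by zero to all of $\sn$, and establish the structural identity
\[
S^\a_{+}\bar\vp=\Lam_{a-}\big(S^\a_{a+}\vp\big)\qquad\text{on }\sn .
\]
This is a pointwise statement proved by splitting the domain of integration: for $x_{n+1}>a$ one has $\langle -1,x_{n+1}\rangle_S=\langle -1,a\rangle_S\cup\langle a,x_{n+1}\rangle_S$, and since $\bar\vp\equiv0$ on the first piece, $(S^\a_{+}\bar\vp)(x)=(S^\a_{a+}\vp)(x)$; for $x_{n+1}\le a$ the entire domain lies in $\langle -1,a\rangle_S$, where $\bar\vp\equiv0$, so $(S^\a_{+}\bar\vp)(x)=0$. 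Hence the zero-extension of the cap integral coincides with the full integral of the zero-extended density.

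Next I would carry out the weight bookkeeping that lets Theorem~\ref{sphere new}(a) apply to $\bar\vp$. Because $\supp\bar\vp\subset\langle a,1\rangle_S$ stays a positive distance from the South Pole, the factor $(1+x_{n+1})^\mu$ is bounded above and below on the support for every real $\mu$, so
\[
\|\bar\vp\|_{L^p(\sn;\,(1+x_{n+1})^\mu(1-x_{n+1})^\nu)}\simeq\|\vp\|_{L^p(\langle a,1\rangle_S;\,(1-x_{n+1})^\nu)}<\infty .
\]
I may therefore choose any exponent $\mu<n/2p'$ (such $\mu$ exist for every $p\in(1,\infty]$, and the choice is immaterial to the norm above), which places $\bar\vp$ in the space to which Theorem~\ref{sphere new}(a) applies. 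Part~(a) imposes no constraint on the North-Pole exponent, so the hypothesis $\nu<n/2p'$ — the equatorial-reflection counterpart of the condition in Corollary~\ref{cor 3.2} — is more than sufficient (it guarantees in particular that $S^\a_{a+}\vp$ is well defined).

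Finally I would assemble the conclusion. Theorem~\ref{sphere new}(a) gives $\stackrel {a.e.}{\lim\limits_{\e\to0}}T^\a_{+,\e}S^\a_{+}\bar\vp=\bar\vp$ a.e. on $\sn$. Invoking the definition $T^\a_{a+,\e}f=P_{a+}T^\a_{+,\e}\Lam_{a-}f$ with $f=S^\a_{a+}\vp$ together with the structural identity of the first step,
\[
T^\a_{a+,\e}S^\a_{a+}\vp=P_{a+}\,T^\a_{+,\e}\,\Lam_{a-}\big(S^\a_{a+}\vp\big)=P_{a+}\,T^\a_{+,\e}\,S^\a_{+}\bar\vp ,
\]
and letting $\e\to0$ then restricting to the cap yields $P_{a+}\bar\vp=\vp$ a.e. on $\langle a,1\rangle_S$, which is the assertion.

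The only point needing genuine care is the structural identity: since $T^\a_{+,\e}$ is nonlocal (it reconstructs $\bar\vp$ from Poisson averages of $S^\a_{+}\bar\vp$ over spheres of smaller radius), one must be sure that this identity holds at least almost everywhere so that the global a.e. inversion transfers to the cap; the domain-splitting above gives it pointwise, so no difficulty remains, and everything else is routine. As a cross-check, the corollary can alternatively be obtained from Corollary~\ref{cor 3.2} by the equatorial reflection $A$ of~\eqref{2.6} via the cap identity $S^\a_{a+}=A\,S^\a_{(-a)-}\,A$ and the induced symmetry $T^\a_{a+,\e}=A\,T^\a_{(-a)-,\e}\,A$; this route makes the hypothesis $\nu<n/2p'$ appear as the exact image of the condition $\mu<n/2p'$ of Corollary~\ref{cor 3.2}.
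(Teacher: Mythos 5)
Your argument is correct and is precisely the one the paper intends: the corollaries after Theorem~\ref{sphere new} are justified there only by the phrase ``owing to the one-sided structure of the corresponding integrals,'' and your extension-by-zero identity $S^\a_{+}\Lam_{a-}\vp=\Lam_{a-}S^\a_{a+}\vp$ together with Theorem~\ref{sphere new}(a) (with an arbitrarily chosen $\mu<n/2p'$, harmless since $1+x_{n+1}$ is bounded away from $0$ on the cap) is exactly that restriction/extension argument made explicit. Your side observation that the hypothesis $\nu<n/2p'$ is not forced by Theorem~\ref{sphere new}(a) itself but rather ensures local integrability of $\vp$ near the North Pole (hence that $S^\a_{a+}\vp$ is well defined) is also consistent with the paper.
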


\section{Inversion of Riesz Potentials}\label {inrep}

In this section we consider Riesz potentials  $I^\a_\Om \vp$ (see (\ref{Riesz})) over the spherical caps $\langle a,1\rangle_S$ and  $\langle -1,b,\rangle_S$. Our aim is to obtain  inversion formulas for   these potentials in the corresponding weighted $L^p$- spaces. The main tool is the following factorization theorem.

\begin{theorem}\label{th 5.1} Let $\a \in (0, n)$, $n \ge 2$. If the integral $I^\a_\Om \vp$ exists in the Lebesgue sense then it can be factorized by the formulas
\be\label {factor} I^\a_\Om \vp=\begin{cases}
\displaystyle{S_{+}^{\a/2} q_{-\a}   S_{b-}^{\a/2}\vp,}  &\hbox {if} \quad \Om=\langle -1,b\rangle_S, \\
\displaystyle{S_{-}^{\a/2} q_{-\a}   S_{a+}^{\a/2}\vp,}  &\hbox {if} \quad \Om=\langle a,1\rangle_S, \\
\end{cases}\ee
where $q_{-\a} =  (1- x_{n+1}^2)^{-\a/2}I$ is the multiplication operator.
\end{theorem}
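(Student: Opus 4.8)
The plan is to reduce the factorization over a spherical cap to the global factorization on the full sphere, Theorem~\ref{Lemma 2.4}, by exploiting the one-sided (causal) structure of the operators $S_\pm^{\a/2}$. Consider the case $\Om=\langle -1,b\rangle_S$; the other case follows by the symmetry relation (\ref{2.6}), $S^\a_\pm\vp=AS^\a_\mp A\vp$, together with the reflection $x_{n+1}\mapsto -x_{n+1}$ that interchanges the two caps. First I would observe that the hypothesis asserts $\vp$ is supported on $\langle -1,b\rangle_S$, so that $\vp=\Lam_{b-}P_{b-}\vp$ in the notation preceding Corollary~\ref{cor 3.2}. For such $\vp$ the global right-sided integral $S_-^{\a/2}\vp$ agrees with the capped integral $S_{b-}^{\a/2}\vp$: indeed, looking at the defining formula (\ref{2.2right}), the integration set $\langle x_{n+1},1\rangle_S$ meets the support of $\vp$ only where $x_{n+1}<y_{n+1}<b$, which is precisely the integration set $\langle x_{n+1},b\rangle_S$ appearing in $S_{b-}^{\a/2}$. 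Hence $S_-^{\a/2}\vp=S_{b-}^{\a/2}\vp$ as functions on $\sn$.

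Next I would apply the global factorization (\ref{2.11}) of Theorem~\ref{Lemma 2.4} in the second form,
\[
I_S^\a\vp=2^{-\a}S_+^{\a/2}q_{-\a}S_-^{\a/2}\vp,
\]
which is available since $I^\a_\Om\vp=I^\a_S\vp$ whenever $\vp$ is supported on $\Om$ and $I^\a_S\vp$ exists in the Lebesgue sense (the Riesz kernel integrates only over the support of $\vp$, so enlarging the domain from $\Om$ to $\sn$ changes nothing). Substituting the identity $S_-^{\a/2}\vp=S_{b-}^{\a/2}\vp$ from the previous step then yields
\[
I^\a_\Om\vp=2^{-\a}S_+^{\a/2}q_{-\a}S_{b-}^{\a/2}\vp,
\]
which is the asserted formula for the South-cap case. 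For the upper cap $\Om=\langle a,1\rangle_S$ I would instead use the first form of (\ref{2.11}) and the analogous identity $S_+^{\a/2}\vp=S_{a+}^{\a/2}\vp$ valid for $\vp$ supported on $\langle a,1\rangle_S$, reading off $I^\a_\Om\vp=2^{-\a}S_-^{\a/2}q_{-\a}S_{a+}^{\a/2}\vp$.

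The one genuine point requiring care, which I expect to be the main obstacle, is the existence and Fubini-type interchange justifying that the global factorization (\ref{2.11}) remains valid and that the two capped factors compose correctly for merely Lebesgue-integrable $\vp$ supported on a proper cap. Concretely, one must confirm that $q_{-\a}S_{b-}^{\a/2}\vp$ lies in a class for which the outer operator $S_+^{\a/2}$ is defined, and that the intermediate integrals converge absolutely so that the order of integration inherited from Theorem~\ref{Lemma 2.4} may be taken. This is where the weighted-integrability hypothesis on $\vp$ and the mapping properties in Theorem~\ref{Th 2.2} enter; the support restriction is what makes the weight conditions automatically compatible, since outside $\Om$ the relevant factors $(1\pm x_{n+1})$ are bounded away from their degenerate values. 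I would discharge this by noting that the existence of $I^\a_\Om\vp$ in the Lebesgue sense is exactly the hypothesis that legitimizes the Fubini argument underlying (\ref{ball tag 24.19}), hence (\ref{2.11}), and that the two one-sided factors act in disjoint radial directions, so no cancellation or convergence issue beyond that already handled on $\rn$ arises.
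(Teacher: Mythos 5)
Your argument is essentially the paper's own proof: the author disposes of Theorem \ref{th 5.1} in a single line as an immediate consequence of Theorem \ref{Lemma 2.4}, the substance being exactly the support observation you spell out, namely that for $\vp$ supported on the cap one has $I^\a_\Om\vp=I^\a_S\vp$ and the global one-sided integral coincides with the capped one. One caveat: carrying the constant $2^{-\a}$ from (\ref{2.11}) literally, as you do, yields $2^{-\a}S_+^{\a/2}q_{-\a}S_{b-}^{\a/2}\vp$, which conflicts with (\ref{factor}) as stated; tracking the powers of $2$ in the displayed proof of Theorem \ref{Lemma 2.4} (which ends with $I^\a_S\vp=S_+^{\a/2}q_{-\a}S_-^{\a/2}\vp$) shows the correct constant is $1$, so the $2^{-\a}$ in (\ref{2.11}) is a misprint that your write-up inherits and should be dropped.
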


This statement is an immediate consequence of Theorem \ref{Lemma 2.4}.

Because we already know how to invert each component in (\ref{factor}), we can now invert the potential on the left-hand side. Examination of conditions of Corollaries \ref{cor 4.1},  \ref{cor 4.2}, and Theorem
\ref{Th 2.2} yields the following result.

\begin{theorem}\label{th 5.2}  Let $f= I^\a_\Om \vp$, $0<\a<n$, $1<p < \infty$, $n\ge 2$.

\vskip 0.2 truecm

\noindent {\rm (i)} If $\Om=\langle -1,b\rangle_S$, $b\in (-1,1)$,   $\vp\in L^p (\Om; (1+ x_{n+1})^\mu)$, $\mu < n/2p'$,  then
\be\label {Inv cup}
\vp = T^{\a/2}_{b-} q_\a  T^{\a/2}_{+} f,\ee

\vskip 0.2 truecm
\noindent {\rm (ii)} If $\, \Om=\langle a,1\rangle_S$, $a\in (-1,1)$,   $\vp\in L^p (\Om; (1- x_{n+1})^\nu)$, $\nu < n/2p'$,  then
\be\label {Inv cap}
\vp = T^{\a/2}_{a+} q_\a  T^{\a/2}_{-} f.\ee
The convergence of the hypersingular integrals in these formulas is understood in the a.e. sense.
\end{theorem}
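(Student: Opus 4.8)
The plan is to deduce this theorem directly from the factorization formula (\ref{factor}) together with the inversion results already established for the one-sided spherical integrals. The key observation is that once the potential is factored as a composition of two one-sided integrals and a multiplication operator, inverting it amounts to inverting each factor in reverse order, provided the intermediate functions land in the correct weighted $L^p$-spaces so that the inversion theorems apply.

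First I would treat case (i), where $\Om=\langle -1,b\rangle_S$. By Theorem \ref{th 5.1} we have $f=S_{+}^{\a/2} q_{-\a} S_{b-}^{\a/2}\vp$. Writing $h=q_{-\a}S_{b-}^{\a/2}\vp$, the equation reads $f=S_{+}^{\a/2}h$. The outer operator here is the left-sided integral $S_{+}^{\a/2}=S_{+}^{a}$ with $a=\a/2$, so I would invert it first by applying $T^{\a/2}_{+}$. To legitimately conclude $T^{\a/2}_{+}f=h$ via Corollary \ref{cor 4.1} (or Theorem \ref{sphere new}(a)), I must verify that $h$ lies in a space $L^p(\sn;w)$ with the weight exponent on $(1+x_{n+1})$ strictly below $n/2p'$. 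This is where Theorem \ref{Th 2.2} enters: starting from $\vp\in L^p(\Om;(1+x_{n+1})^\mu)$ with $\mu<n/2p'$, the right-sided integral $S_{b-}^{\a/2}$ (via its restriction/extension structure governed by part (ii) of that theorem) maps into a weighted space, and multiplication by $q_{-\a}=(1-x_{n+1}^2)^{-\a/2}$ adjusts both exponents. I would track the exponents through these steps and check that the resulting $(1+x_{n+1})$-exponent of $h$ still satisfies the hypothesis $\mu<n/2p'$ required by the $T^{\a/2}_{+}$ inversion.

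Having obtained $T^{\a/2}_{+}f=h=q_{-\a}S_{b-}^{\a/2}\vp$, I would then multiply by $q_{\a}=(1-x_{n+1}^2)^{\a/2}$ to cancel $q_{-\a}$, giving $q_\a T^{\a/2}_{+}f=S_{b-}^{\a/2}\vp$. Finally I would apply $T^{\a/2}_{b-}$ and invoke Corollary \ref{cor 3.2}, which states precisely that $T^{\a/2}_{b-}S_{b-}^{\a/2}\vp=\vp$ under the hypothesis $\mu<n/2p'$ on the weight $(1+x_{n+1})^\mu$ for functions supported on $\langle -1,b\rangle_S$. This yields $\vp=T^{\a/2}_{b-}q_\a T^{\a/2}_{+}f$, which is (\ref{Inv cup}). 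Case (ii) is entirely symmetric: it follows by applying the reflection $A$ from (\ref{2.6}), or equivalently by repeating the argument with the roles of $S_+$ and $S_-$, of $a$ and $b$, and of $\mu$ and $\nu$ interchanged, using Corollaries \ref{cor 4.2} and \ref{cor 3.1} in place of the ones above.

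The main obstacle I anticipate is the bookkeeping of weight exponents: I must confirm that at each of the three stages the intermediate function genuinely belongs to the weighted $L^p$-space whose exponent condition is demanded by the next inversion step, so that the a.e. convergence theorems apply without gap. In particular the single hypothesis $\mu<n/2p'$ (resp. $\nu<n/2p'$) must propagate consistently through the action of $S_{b-}^{\a/2}$, the multiplication by $q_{-\a}$, and the final inversion; the boundedness statement in Theorem \ref{Th 2.2} is what makes this propagation work, and its two-case structure for the $\nu_+$ exponent is the delicate point to handle with care. Everything else is a formal cancellation of operators, so once the mapping properties line up, the conclusion is immediate.
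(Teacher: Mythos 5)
Your proposal is correct and follows essentially the same route as the paper: invert the factorization of Theorem \ref{th 5.1} factor by factor in reverse order, using Corollaries \ref{cor 4.1} and \ref{cor 3.2} (resp. \ref{cor 3.1} and \ref{cor 4.2}) for the one-sided inversions and Theorem \ref{Th 2.2} to verify that the intermediate function $q_{-\a}S_{b-}^{\a/2}\vp$ satisfies the weight condition required for the outer inversion. The only thing left implicit in your write-up is the actual exponent computation (which does close, since in the second case of (\ref{2.10}) one has $\mu_-+\a/2=-n/2p+\e+\a/2<n/2p'$ for small $\e$ because $\a<n$), but the paper leaves exactly the same check to the reader.
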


\begin{remark} The factorization Theorem \ref{th 5.1}, combined with the spherical harmonic expansions in Theorem \ref{nonzonal}, enables one to compute the Fourier-Laplace coefficients of the unknown density $\vp$ by solving two Abel-type integral equations corresponding to fractional integrals (\ref{genp}) and (\ref{genm}). Many inversion methods for such equations are discussed, e.g.,  in the books \cite{Ru15, Ru24, SKM}. We leave this exercise to the interested reader.
\end{remark}

\section{Appendix}

Recall that our aim is to prove the following inversion formulas:
\bea\label {Recall}
(T^{\a}_{\pm} f) (x) &=& \frac{\Gam (n/2)}{\Gamma (n/2 - \a)\, (1\pm x_{n+1})^\a}\, f(x)\\
&+& \frac{2^\a}{\sig_{n-1}  \Gamma (1 - \a)}\int\limits_{\sn}
\frac{f(x) - f(y)}{ (x_{n+1}-y_{n+1})^\a_{\pm} \,  |x-y|^n}\, dy,\nonumber\eea
where
\be\label {plug} T^{\a}_{\pm}f=(S^{\a}_{\pm})^{-1}f= 2^{\a}\, t_{-n/2-\a} P^{-1} (B^\a_{\pm})^{-1} P\,t_{-\a+n/2}f,\ee
$t_\lam =(1-x_{n+1})^\lam I$, $0<\a<1$.

Owing to (\ref{2.6}), it suffices to consider the operator $ (S^{\a}_{-})^{-1}$. Let us  plug the expression  (\ref{ball tag 25.9}) for $(B^\a_{-})^{-1}$ in (\ref{plug}). The transition formulas
(\ref{ster proj})-(\ref{1.25}) yield
\bea &&(S^{\a}_{-})^{-1}f)(x)= 2^{\a} (1-x_{n+1})^{-n/2 -\a}  P^{-1}\Bigg \{\frac{2\a}{\sig_{n-1} \Gam (1-\a)}\nonumber\\
&&\times\intl_{|\eta| >|\xi|} \frac{ \displaystyle{\Big (\frac{2}{1+|\xi|^2}\Big )^{-\a +n/2} (Pf)(\xi) -  \Big (\frac{2}{1+|\eta|^2}\Big )^{-\a +n/2} (Pf)(\eta)}}
{(|\eta|^2 -|\xi|^2)^\a \, |\xi -\eta|^n}\, d\eta\Bigg \}\nonumber\\
&&=\frac{2^\a}{\sig_{n-1} \Gam (1-\a)}
\intl_{\langle x_{n+1},1\rangle_S} \frac{\displaystyle{\Big (\frac{1-y_{n+1}}{1-x_{n+1}}\Big )^{\a -n/2} f(x) -f(y)}}
{(y_{n+1}-x_{n+1})^\a |x-y|^n }\,dy=I_1+I_2, \nonumber\eea
where
\bea &&I_1=\frac{2^\a\, f(x)}{\sig_{n-1} \Gam (1\!-\!\a)}
\!\intl_{\langle x_{n+1},1\rangle_S}\!\!\! \!\!\Big [\Big (\frac{1\!-\!y_{n+1}}{1\!-\!x_{n+1}}\Big )^{\a -n/2}\!\!\! -1\Big ]\, \frac{dy}{(y_{n+1}\!-\!x_{n+1})^\a |x\!-\!y|^n},\nonumber\\
&&I_2=\frac{2^\a}{\sig_{n-1} \Gam (1-\a)}
\intl_{\langle x_{n+1},1\rangle_S} \frac{ f(x) -f(y)}{(y_{n+1}-x_{n+1})^\a |x-y|^n }\,dy.\nonumber\eea

The expression $I_2$ gives the second term in (\ref{Recall}). To show that $I_1$ coincides with the first term, we denote
\[ \Lam= \frac{2^\a}{\sig_{n-1} \Gam (1-\a)}
\intl_{\langle x_{n+1},1\rangle_S}\!\!\! \!\!\Big [\Big (\frac{1\!-\!y_{n+1}}{1\!-\!x_{n+1}}\Big )^{\a -n/2}\!\!\! -1\Big ]\, \frac{dy}{(y_{n+1}\!-\!x_{n+1})^\a |x\!-\!y|^n}\]
and prove the following lemma.
\begin{lemma}
\be\label{3.33} \Lam=\frac{\Gam (n/2)}{\Gamma (n/2 - \a)\, (1- x_{n+1})^\a}.\ee
\end{lemma}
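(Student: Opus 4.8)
The plan is to evaluate the constant $\Lam$ by reducing the surface integral over the spherical segment $\langle x_{n+1}, 1\rangle_S$ to a one-dimensional integral in the height variable $y_{n+1}$, and then to recognize the resulting beta-type integral. First I would exploit the rotational symmetry about the $x_{n+1}$-axis. Since the integrand depends on $y$ only through $y_{n+1}$ and through $|x-y|$, I would fix the coordinates so that $x$ lies in the plane spanned by a chosen direction and $e_{n+1}$, and write $y = (\sqrt{1-y_{n+1}^2}\,\omega, y_{n+1})$ with $\omega \in S^{n-1}$. The quantity $|x-y|^2 = 2 - 2\langle x, y\rangle = 2 - 2(x_{n+1} y_{n+1} + \sqrt{1-x_{n+1}^2}\sqrt{1-y_{n+1}^2}\,\langle x', \omega\rangle)$ then depends on $\omega$ only through the inner product $\langle x',\omega\rangle$, so the integration over $S^{n-1}$ reduces (by the standard formula $dy = (1-y_{n+1}^2)^{(n-2)/2}\,dy_{n+1}\,d\omega$ on $S^n$) to a one-variable angular integral that I expect to collapse after the substitution below.

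The cleaner route, which I would actually follow, is to transfer the whole computation to $\rn$ through the stereographic projection, since $\Lam$ was produced precisely by applying $P^{-1}$ to the Euclidean hypersingular integrand. Using the transition relations (\ref{1.21})--(\ref{1.25}), the factor $(y_{n+1}-x_{n+1})^{-\a}|x-y|^{-n}$ and the measure $dy$ pull back to expressions in $\xi=\gam x$, $\eta=\gam y$, and the bracket $[(\tfrac{1-y_{n+1}}{1-x_{n+1}})^{\a-n/2}-1]$ becomes a difference of powers of $(1+|\eta|^2)/(1+|\xi|^2)$. In these variables $\Lam$ should match the constant arising when $(\bbd^\a_- \mathbf{1})(\xi)$ is computed for the constant function, or equivalently the zero-order term in the Euclidean inversion formula (\ref{ball tag 25.9}) applied to a constant: there the singular integral $\int_{|\eta|>|\xi|}(|\eta|^2-|\xi|^2)^{-\a}|\xi-\eta|^{-n}\,d\eta$ is exactly the kind of integral whose value gives the factor $\Gam(n/2)/\Gam(n/2-\a)\cdot|\xi|^{-2\a}$ appearing as the first term of (\ref{ball tag 25.6}). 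Thus I would argue that $\Lam$ is the spherical image of that known Euclidean constant, and the identity (\ref{3.33}) follows by undoing the multiplication operators $t_{\pm}$ and tracking the powers of $(1-x_{n+1})$.

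Concretely, after the reduction to one dimension I would substitute $s = (1-y_{n+1})/(1-x_{n+1})$, which maps $y_{n+1}\in (x_{n+1},1)$ to $s\in(0,1)$ and turns $(y_{n+1}-x_{n+1})^\a$, the bracket $s^{\a-n/2}-1$, and the power of $|x-y|^n$ all into powers of $s$ times powers of $(1-x_{n+1})$. The $(1-x_{n+1})$ powers should combine to give exactly the claimed $(1-x_{n+1})^{-\a}$, leaving a pure beta integral $\int_0^1 (s^{\a-n/2}-1)\,s^{-\a}(1-s)^{\text{(something)}}\,ds$ (or a difference of two such integrals) that evaluates via $\int_0^1 s^{p-1}(1-s)^{q-1}\,ds = B(p,q) = \Gam(p)\Gam(q)/\Gam(p+q)$. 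The ratio $\Gam(n/2)/\Gam(n/2-\a)$ should emerge from the difference of two beta values, the divergent-looking individual pieces cancelling because of the subtraction of $1$ in the bracket — this cancellation is what regularizes the integral near $s=1$ (i.e. near $y=x$).

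The main obstacle I anticipate is the angular integration over $S^{n-1}$: after fixing $y_{n+1}$, one must integrate $|x-y|^{-n}$ over the latitude sphere, and $|x-y|^{-n}$ is not constant on that sphere once $x$ is off the pole, so a naive reduction to $y_{n+1}$ alone is not immediate for general $x$. I expect this is precisely why the stereographic route is preferable: in the Euclidean picture the analogous computation has already been carried out in establishing (\ref{ball tag 25.6}), so I would lean on that known evaluation (from \cite{Ru94a} or \cite[Chapter 10]{Ru24}) rather than redo the angular integral from scratch. The remaining work is then the bookkeeping of the conformal weights, which is routine given Lemma \ref{Lemma 2.2} and the relations (\ref{1.21})--(\ref{1.25}).
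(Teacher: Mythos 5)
Your overall route coincides with the paper's: transfer $\Lam$ to $\rn$ by stereographic projection, reduce to a one-dimensional integral in the radial variable, and evaluate a regularized beta-type integral (your ``difference of two beta values with cancelling divergences'' is, in substance, the analytic continuation the paper performs to get $\frac{\a}{\Gam(1-\a)}\int_0^\infty\frac{1-(1+s)^{\a-n/2}}{s^{1+\a}}\,ds=\frac{\Gam(n/2)}{\Gam(n/2-\a)}$). However, the shortcut you propose for the middle of the argument does not work. After the transfer, $\Lam$ is (up to the conformal prefactor $(1+|\xi|^2)^{n/2+\a}$) the operator $\bbd^\a_-$ of (\ref{ball tag 25.9}) applied to the \emph{non-constant} conformal weight $F(\eta)=(1+|\eta|^2)^{\a-n/2}$ — that is exactly what the bracket $\bigl[(\tfrac{1-y_{n+1}}{1-x_{n+1}})^{\a-n/2}-1\bigr]$ becomes. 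It is not $(\bbd^\a_-\mathbf 1)$: applied to a constant, (\ref{ball tag 25.9}) gives identically zero, and the term $\frac{\Gam(n/2)}{\Gam(n/2-\a)}|\xi|^{-2\a}F(\xi)$ you want to import occurs only in the \emph{left-sided} formula (\ref{ball tag 25.6}); it is absent from $\bbd^\a_-$. (Also, the unsubtracted integral $\int_{|\eta|>|\xi|}(|\eta|^2-|\xi|^2)^{-\a}|\xi-\eta|^{-n}\,d\eta$ diverges near $\eta=\xi$, so it cannot ``give'' that factor.) Consequently there is no known Euclidean constant to lean on; the integral of $\bbd^\a_-F$ for this specific $F$ has to be computed, and that is precisely the content of the lemma.

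The second gap is the one you flag yourself but leave unresolved: the angular integration. The paper's key observation is that, in polar coordinates $\eta=(\rho,\eta')$ with $\rho>r=|\xi|$, the latitude integral is a Poisson integral of the constant function, namely $\intl_{S^{n-1}}|\xi-\rho\eta'|^{-n}\,d\eta'=\rho^{-n}\,\sig_{n-1}\,(1-r^2/\rho^2)^{-1}\,\Pi[1]\bigl(\xi',r/\rho\bigr)$ with $\Pi[1]=1$, since the remaining factors of the integrand depend on $\eta$ only through $|\eta|$. This collapses the $n$-dimensional integral exactly to the one-dimensional Marchaud-type integral $h_\a(t)=\frac{\a}{\Gam(1-\a)}\int_t^\infty\frac{(1+t)^{\a-n/2}-(1+\t)^{\a-n/2}}{(\t-t)^{1+\a}}\,d\t$, after which a translation $\t\mapsto t+\t$ extracts the factor $(1+t)^{-n/2}$ and leaves the $t$-independent constant $\psi(\a)$. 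Your substitution $s=(1-y_{n+1})/(1-x_{n+1})$ and the bookkeeping of powers of $(1-x_{n+1})$ are consistent with this, but without the Poisson-kernel identity (or an equivalent evaluation of the latitude integral of $|x-y|^{-n}$) the reduction to a single beta integral is not justified, and with it the rest of your outline goes through.
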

\begin{proof} Denote $\gam= 2^\a/\Gam (1-\a)$, $r=|\xi|$. We have
\bea
&&\Lam=\frac{\gam}{\sig_{n-1}}\intl_{|\eta| >|\xi|} \Big [\Big (\frac{1+|\xi|^2}{1-|\eta|^2}\Big )^{\a -n/2}\!\!\! -1\Big ]\nonumber\\
&&\times \frac{(1+|\xi|^2)^{n/2} (1+|\eta|^2)^{n/2}\, d\eta}{\displaystyle{ (1+|\eta|^2)^{n} \Big (\frac{2|\eta|^2}{1+|\eta|^2}- \frac{2|\xi|^2}{1+|\xi|^2}\Big )^\a  |\xi -\eta|^n}}\nonumber\\
&&= \frac{2^{-\a} \gam}{\sig_{n-1}}\intl_{|\eta| >|\xi|}\Big [\Big (\frac{1+|\xi|^2}{1-|\eta|^2}\Big )^{\a -n/2}\!\!\! -1\Big ]\frac{(1+|\xi|^2)^{n/2+\a}  d\eta}{ (1+|\eta|^2)^{n/2-\a}(|\eta|^2 -|\xi|^2)^\a |\xi -\eta|^n}\nonumber\\
&&= \frac{2^{-\a} \gam}{\sig_{n-1}}\intl_r^\infty \Big [\Big (\frac{1+r^2}{1-\rho^2}\Big )^{\a -n/2}\!\!\! -1\Big ]\frac{(1+r^2)^{n/2+\a} \rho^{-1} (1-r^2/\rho^2)^{-1} d\rho
}{ (1+\rho^2)^{n/2-\a}(\rho^2 -r^2)^\a }\nonumber\\
&&\times \intl_{S^{n-1}}\frac{1-r^2/\rho^2}{|\xi' r/\rho -\eta'|^n}\, d\eta'\nonumber\\
&&=\gam 2^{-\a}\intl_r^\infty  \Big [\Big (\frac{1+r^2}{1-\rho^2}\Big )^{\a -n/2}\!\!\! -1\Big ] \frac{(1+r^2)^{n/2+\a} \rho \, d\rho}{(1+\rho^2)^{n/2-\a}(\rho^2 -r^2)^{\a+1} }\nonumber\\
&&=2^{-\a}\, (1+r^2)^{n/2+\a}\,h_\a (r^2),\nonumber\eea
where
\bea &&h_\a (t)=\frac{\a}{\Gam (1-\a)}\intl_t^\infty \frac{(1+t)^{\a-n/2} - (1+\t)^{\a-n/2}}{(\t -t)^{\a+1}}\, d\t\nonumber\\
&&=\frac{\a}{\Gam (1-\a)}\intl_0^\infty \frac{(1+t)^{\a-n/2} - (1+t+\t)^{\a-n/2}}{\t^{\a+1}}\, d\t=(1+t)^{-n/2}\psi (\a),\nonumber\eea
\[
\psi (\a)=\frac{\a}{\Gam (1-\a)}\intl_0^\infty \frac{1-(1+s)^{\a-n/2}}{s^{\a+1}}\, ds.\]
Thus  (cf.   (\ref{1.23})),
\[\Lam=2^{-\a}\, (1+r^2)^{\a} \psi (\a)=\frac {\psi (\a)}{(1-x_{n+1})^\a}, \]
and it remains to show that
\be\label{ruak} \psi (\a)\equiv \frac{\a}{\Gam (1-\a)}\intl_0^\infty \frac{1-(1+s)^{\a-n/2}}{s^{\a+1}}\, ds=\frac{\Gam (n/2)}{\Gam (n/2-\a)}.\ee

 To this end, consider the integral
\[
J(\a)=\frac{1}{\Gam (\a)}\intl_0^\infty s^{\a-1} (1+s)^{-\a-n/2}\, ds= \frac{\Gam (n/2)}{\Gam (n/2+\a)}, \qquad Re \, \a >0,\]
and compute  analytic continuation  ($ a.c.$) of this equality to $Re \, \a \in (-1,0)$.   For $ Re \, \a >0$ we have
\bea
J(\a)&=&\frac{1}{\Gam (\a)}\Big (\intl_0^1 + \intl_1^\infty\Big )\, s^{\a-1} (1+s)^{-\a-n/2}\, ds\nonumber\\
&=&\frac{1}{\Gam (\a)}\intl_0^1 s^{\a-1}  \left [(1+s)^{-\a-n/2}-1\right ]\,ds \nonumber\\
&+& \frac{1}{\Gam (\a +1)}+ \frac{1}{\Gam (\a)} \intl_1^\infty s^{\a-1} (1+s)^{-\a-n/2}\, ds. \nonumber\eea
This gives
\bea \underset{Re \, \a \in (-1,0)}  { a.c.}
 J(\a)&=& \frac{1}{\Gam (\a)}\intl_0^1 s^{\a-1}  \left [(1+s)^{-\a-n/2}-1\right ]\,ds\nonumber\\
 &+&\frac{1}{\Gam (\a)} \intl_1^\infty  s^{\a-1} (1+s)^{-\a-n/2}\, ds-\frac{1}{\Gam (\a)} \intl_1^\infty  s^{\a-1} \, ds\nonumber\\
&=& \frac{1}{\Gam (\a)} \intl_0^\infty \frac{(1+s)^{-\a-n/2} -1}{s^{1-\a}}\, ds.\nonumber\eea
Thus, for $Re \, \a \in (-1,0)$, we have
\[
\frac{1}{\Gam (\a)} \intl_0^\infty \frac{(1+s)^{-\a-n/2} -1}{s^{1-\a}}\, ds=\frac{\Gam (n/2)}{\Gam (n/2+\a)}.\]
Replacing $\a$ by $-\a$, we obtain
\[\frac{1}{\Gam (-\a)} \intl_0^\infty \frac{(1+s)^{\a-n/2} -1}{s^{1+\a}}\, ds=\frac{\Gam (n/2)}{\Gam (n/2-\a)},\]
which gives  (\ref{ruak}).
\end{proof}

\bibliographystyle{amsplain}

\end{document}